\numberwithin{equation}{section}
       \theoremstyle{plain}
\newtheorem{theorem}{Theorem}[section]
\newtheorem{lemma}[theorem]{Lemma}
\newtheorem{proposition}[theorem]{Proposition}
\theoremstyle{remark}
{
    \newtheorem{definition}[theorem]{Definition}
    
    \newtheorem{remark}[theorem]{Remark}

}
\def\dd{{\rm d}}
\newcommand{\ddt}{\frac{\rm d}{{\rm d} t} }
\def\weight(#1,#2){c_{#1,#2}}
\def\uh{\hat{u}}
\def\xh{\hat{x}}
\def\yb{\bar{y}}
\def\C{\mathcal{C}}
\def\I{\mathcal{I}}
\def\P{\mathcal{P}}
\def\SS{\mathcal{S}}
\def\eps{\varepsilon}
\def\Om{{\Omega}}
\def\1B{{\bf  1}}
\def\half{\mbox{$\frac{1}{2}$}}
\def\1B{{\bf  1}}
\newcommand{\RR}{\mathbb{R}}
\def\cR{\mathbb{R}}
\newcommand\be{\begin{equation}}
\newcommand\ee{\end{equation}}
\newcommand\ba{\begin{array}}
\newcommand\ea{\end{array}}
\newcommand{\bea}{\begin{eqnarray}}
\newcommand{\eea}{\end{eqnarray}}
\newcommand{\bean}{\begin{eqnarray*}}
\newcommand{\eean}{\end{eqnarray*}}
\def\rar{\rightarrow}
\def\ds{\displaystyle}
\newcommand{\umin}{ u_{\rm min}}
\newcommand{\umax}{ u_{\rm max}}
\def\benl{\begin{equation*}}
\def\eenl{\end{equation*}}
\newcommand{\dtt}{ \mathrm{d}t}
\newcommand{\tauh}{ \hat\tau}
\DeclareMathAlphabet{\mathpzc}{OT1}{pzc}{m}{it}
\begin{document}

\title[Shooting algorithm for state-constrained control-affine problems]{Well-posedness of the shooting algorithm for control-affine problems \\
with a scalar state constraint}

\author[M.S. Aronna and F. Bonnans and B.S. Goh]
{M. Soledad Aronna\address{Escola de Matem\'atica Aplicada FGV EMAp, 22250-900 Rio de Janeiro, Brazil}\email{soledad.aronna@fgv.br}
and
J. Fr\'{e}d\'{e}ric Bonnans\address{INRIA Saclay , L2S, CentraleSupelec, 91190 Gif-sur-Yvette, France}
\email{Frederic.Bonnans@inria.fr}  
and 
Bean San Goh\address{School of Electrical Engineering, Computing and Mathematical Sciences
Curtin University, Perth, Australia}\email {goh2optimum@gmail.com}
}

\thanks{The first author was supported by FAPERJ (Brazil) through the {\em Jovem Cientista do Nosso Estado} Program and by CNPq
  (Brazil) through the {\em Universal} Program and the
  Productivity Scholarship.
  The second author was supported by the FiME Lab Research Initiative (Institut Europlace de Finance), and by the PGMO program.
 }

\maketitle

\begin{abstract}
We deal with a control-affine problem with scalar control subject to bounds, a scalar state constraint and endpoint constraints of equality type.
For the numerical solution of this problem, we propose a shooting algorithm  and  provide a sufficient condition for its local convergence. 
We exhibit an example that illustrates the theory.
\end{abstract}


\section{Introduction}


In this article we deal with an optimal control problem governed by the dynamics
\benl
\dot x_t = f_0(x_t) + u_t f_1(x_t)\quad \text{for a.a. } t\in [0,T],
\eenl
subject to the control bounds
$$
u_{\rm min} \leq u_t \leq u_{\rm max},
$$
with $u_{\rm min} < u_{\rm max}$,
endpoint constraints like
$$
\Phi(x_0,x_T) = 0,
$$
and a scalar state constraint of the form
$$
g(x_t) \leq 0.
$$
For this class of problems,  we propose a shooting-like numerical scheme and we show a sufficient condition for its local quadratic convergence, that is also a second order sufficient condition for optimality (in a particular sense to be specified later on). Additionally, we solve an example of practical interest, for which we also prove optimality by applying  second order sufficient optimality conditions obtained in \cite{AronnaBonnansGoh2016}.

This investigation is strongly motivated by applications since we deal with both control and state constraints, which naturally  appear in realistic models. Many practical examples that are covered by our chosen framework can be found in the existing literature. A non exhaustive list includes the prey-predator model \cite{GLV74}, the Goddard problem in presence of a dynamic pressure limit \cite{SeywaldCliff1993,GraichenPetit2008}, the optimal control of the atmospheric arc for re-entry of a space shuttle seen in \cite{Bonnard2003}, an optimal production and maintenance system studied in \cite{MaurerKimVossen2005}, and a recent optimization problem on running strategies \cite{AftalionBonnans2014}. 
We refer also to \cite{dePinho2005}, \cite{MaurerDePinho2016}, \cite{Schaettler2006} and references therein.

As it is commonly known, the application of the necessary conditions provided by Pontryagin's Maximum Principle leads to an associated two-point boundary-value problem (TPBVP) for the optimal trajectory and its associated multiplier \cite{Vinter2000}. A natural way for solving TPBVPs numerically  is the application of {\em shooting algorithms} \cite{MorRilZan62}. This type of algorithms has been used extensively for the resolution of optimal control problems (see {\em e.g.} \cite{Bul71,BockPlitt1984,Pes94} and references therein). In particular, shooting methods have been applied to control-affine problems both with and without state constraints. Some works in this direction are mentioned in the sequel. Maurer \cite{Mau76} proposed a shooting scheme for solving a problem  with bang-singular solutions, which was generalized quite recently by Aronna, Bonnans and Martinon in \cite{AronnaBonnansMartinon2013}, where they provided a sufficient condition for its local convergence.
Both these articles \cite{Mau76} and \cite{AronnaBonnansMartinon2013} analyze the case with control bounds and no state constraints.
 Practical control-affine problems with state constraints were solved numerically in several articles, a non extensive list includes Maurer and Gillessen \cite{MauGil75}, Oberle \cite{Obe79}, Fraser-Andrews \cite{Fra89} {and the recent articles Cots \cite{Cots2017} and Cots {\em et al} \cite{CotsGergaud2022}}.
Up to our knowledge, there is no result in the existing literature concerning sufficient conditions for the convergence of shooting algorithms in the framework considered here.

The paper is organized as follows. In Sections \ref{SecProblem} and \ref{SecArcs} we introduce the problem and give the basic definitions. 
 A shooting-like method and a sufficient condition for its local quadratic convergence are given in Sections \ref{SecShooting} and \ref{SecSuff}, respectively. The algorithm is implemented in Section \ref{SecExamples} where we solve numerically a variation of the regulator problem and we prove the optimality of the solution analytically. 
 
 \vspace{4pt}

\noindent {\bf Notations.} 
Let $\cR^k$ denote the $k-$dimensional real space, {\em i.e.} the space of column real vectors of dimension $k,$ and by  $\cR^{k*}$ its corresponding dual space, which consists of $k-$dimensional row real vectors. With $\cR^k_+$ and $\cR^k_-$ we refer to the subsets of $\cR^k$ consisting of vectors with nonnegative, respectively nonpositive, components.
We write $h_t$ for the value of function $h$ at time $t$ if $h$ is a function that depends only on $t,$ and by $h_{i,t}$ the $i$th component of $h$ evaluated at $t.$
Let $h(t+)$ and $h(t-)$ be, respectively, the right and left limits of $h$ at $t,$ if they exist.
Partial derivatives of a function $h$ of $(t,x)$ are referred as $D_th$ or $\dot{h}$ for the derivative in time, and $D_xh,$ $h_x$ or $h'$ for the differentiation with respect to space variables. The same convention  is extended to higher order derivatives.
By $L^p(0,T)^k$ we mean the Lebesgue space with domain equal to the interval $[0,T]\subset \cR$ and with values in $\cR^k.$ The notations $W^{q,s}(0,T)^k$ and $H^1(0,T)^k$ refer to the Sobolev spaces (see Adams \cite{Ada75} for further details on Sobolev spaces).
We let $BV(0,T)$ be the set of functions with bounded total variation. 
In general, when there is no place for confusion, we omit the argument $(0,T)$ when referring to a space of functions. For instance, we write  {
$L^\infty$ for $L^\infty(0,T),$
}
or $(W^{1,\infty})^{k*}$ for the space of $W^{1,\infty}-$functions from $[0,T]$ to $\cR^{k*}.$
We say that a function $h: \cR^k \to \cR^d$ is of class $C^\ell$ if it is $\ell-$times continuously differentiable in its domain.

\section{The problem}\label{SecProblem}

Let us consider $L^\infty(0,T)$ and $W^{1,\infty}(0,T;\cR^n)$ as control and state spaces,  respectively. 
We say that a control-state pair $(u,x)\in L^\infty(0,T)\times W^{1,\infty}(0,T;\cR^n)$
is a  {\em trajectory} if it satisfies both the {\em state equation}
\be
\label{bsbstateeq}
\dot x_t = f_0(x_t) +u_{t} f_1(x_t) \quad \text{for a.a. } t\in [0,T],
\ee
and the finitely many  {\em endpoint constraints}
of equality type given by
\be
\Phi(x_0,x_T) = 0.
\ee
Here $f_0$ and $f_1$ are assumed to be Lipschitz continuous and twice continuously differentiable vector fields over $\RR^n$,
$\Phi$ is of class $C^2$ from $\RR^n\times\RR^n$ to $\RR^{q}.$
Under these hypotheses, for any pair control-initial condition $(u,x_0)$ in $L^\infty(0,T)\times \RR^n$, the state equation
\eqref{bsbstateeq} has a unique solution. Additionally, we consider
a {\em cost functional} 
$$
\phi:\cR^n \times \cR^n \to \cR,$$ 
the
 {\em control bounds} 
\be
\label{bsbcc} 
\umin\leq u_{t} \leq \umax \quad \text{for a.a. } t\in [0,T],
\ee
where $\umin < \umax$,
and a {\em scalar state constraint} 
\be
\label{stateconstraint1}
g(x_t) \leq 0\quad \text{for all } t\in [0,T],
\ee
with the functions $\phi$ and $g:\RR^n\rar \RR$ being of class $C^2.$ 
A trajectory $(u,x)$ is said to be {\em feasible} if it satisfies 
\eqref{bsbcc}-\eqref{stateconstraint1}.

\begin{remark}[On the control bounds]
We allow $u_{\min}$ and $u_{\max}$ to be either finite real numbers, or to take the values  $-\infty$ or $+\infty,$ respectively,  meaning that we also consider problems with control constraints of the form $u_t \leq u_{\max}$ or $u_{\min} \leq u_t$, as well as problems in the absence of control constraints.
\end{remark}

Summarizing, this article deals with the optimal control problem in the Mayer form given by
\be
\label{P}\tag{P}
\min \phi(x_0,x_T);
\qquad \text{subject to \eqref{bsbstateeq}-\eqref{stateconstraint1}.}
\ee

 \subsection{Types of minima}

Throughout this article, we make use of two notions of optimality that are {\em weak} and {\em Pontryagin minima} and are defined as follows.

\begin{definition}[Weak and Pontryagin minima]
\label{defminpoint}
A {\em weak minimum} for \eqref{P} is a feasible trajectory $(u,x)$ for which  there exists $\eps>0$ such that $\phi(x_0,x_T) \leq \phi(\tilde x_0,\tilde x_T)$ for  any feasible $( \tilde u,\tilde x)$ verifying $\|(\tilde u,\tilde x)-(u,x)\|_{\infty} < \eps.$ 

A feasible trajectory $(u,x)$ is called a {\em Pontryagin minimum} for \eqref{P} if for any $M>0,$ there exists $\eps_M>0$ such that $\phi(x_0,x_T) \leq \phi(\tilde x_0,\tilde x_T)$ for  any feasible $( \tilde u,\tilde x)$ satisfying 
\be
\label{defminpont1}
\| \tilde x - x\|_\infty +\| \tilde u -u \|_1 < \eps_M,\qquad \| \tilde u - u\|_\infty < M.
\ee
\end{definition}

 Note that any Pontryagin minimum is also a weak minimum. Consequently, necessary conditions that hold for weak minima, also do it for Pontryagin one. 
This article provides  a numerical scheme for approximating Pontryagin minima of \eqref{P}.  
 In order to achieve this, we make use of the auxiliary unconstrained transformed problem (TP) given in equations \eqref{costTP}-\eqref{continuityxTP}, which possesses neither control bounds {nor} state constraints and  can be solved  numerically in an efficient way.
In Lemma \ref{LemmaPontWeak} below we prove that transformed Pontryagin minima of  \eqref{P} that verify certain structural hypotheses are weak minima of the unconstrained transformed problem (TP).

\subsection{Bang, constrained and singular arcs}\label{SecArcs}
The {\em contact set} associated with the state constraint is defined as
\be
\label{defC}
C := \{ t\in [0,T]:\; g(\xh_t)=0 \}.
\ee
For $0 \leq a < b \leq T$, we say that $(a,b)$ is an 
{\em active arc} for the state constraint
or, shortly,  a {\em $C$ arc,} 
if $(a,b)$ is a maximal open interval contained in $C.$  
A point  $\tau\in (0,T)$ is a {\em junction point of the state constraint} 
if it is the extreme point of a $C$ arc.

Similar definitions hold for the control constraint, with the difference that the control variable is only almost everywhere defined.
The {\em contact sets for the control bounds} are given by
\begin{gather*}
B_- := \{ t\in [0,T]:\; \uh_t = \umin  \},
\quad B_+ := \{ t\in [0,T]:\; \uh_t = \umax  \},\\
 B:=B_- \cup B_+.
\end{gather*}
Note that these sets are defined up to null measure sets. Additionally, observe that if $\umin = - \infty$ then $B_- = \emptyset$ and, analogously, if $\umax=+\infty$ then $B_+ = \emptyset.$
We say that
$(a,b)$ is a {\em $B_-$} (resp. {\em $B_+)$ arc}
if $(a,b)$  is included, up to a null measure set, in
$B_-$ (resp. in $B_+$),
but no open interval strictly containing $(a,b)$ is.
We say that $(a,b)$ is a {\em $B$ arc} if it is either a
$B_{-}$ or a $B_+$ arc. 

Finally, let $S$ denote the {\em singular set} given by
\be
S:=\{ t\in [0,T]: u_{\min} < \uh_t < u_{\max} \text{ and } g(\xh_t) <0 \}.
\ee
We say that $(a,b)$ is an {\em $S$ arc} if $(a,b)$  is included,
up to a null measure set, in $S$, 
but no open interval strictly containing $(a,b)$ is.

We call {\em junction} or {\em switching times} the points $\tau \in
(0,T)$ at which the trajectory $(\xh,\uh)$ switches from one type of
arc ($B_-,B_+,C$ or $S$) to another type. Junction/switching times are
denominated by the type of arcs they separate. One can have, for instance, {\em $CS$ junction, $B_-B_+$ switching time, etc. } 

Throughout the remainder of the article, we assume that the {\em state constraint is of first order}, this is,
\be
\label{1order}
g'(\xh_t)f_1(\xh_t) \neq 0\quad \text{on } C,
\ee
and we impose the following {\em hypotheses on the control structure:}
\be
\label{chypradn}
\left\{ \ba{cl}
 {\rm (i)} &\text{the interval $[0,T]$ is (up to a zero measure set)
the disjoint}
\\ & \text{union of finitely   many arcs of type $B$, $C$ and $S,$
and the set $C$} \\
& \text{does not contain isolated points,}
\\
{\rm (ii)} &\text{the control $\uh$ is at uniformly positive distance
  of the bounds } 
\\& u_{\min} \text{ and } u_{\max}, \text{ over $C$ and $S$ arcs,}
\\ 
{\rm (iii)} &
\text{the control $\uh$ is discontinuous at CS and SC junctions.}
\ea \right.
\ee

{
\begin{remark}
Note that some problems 
(even if they are convex, like Fuller's problem \cite{MR0156744})
exhibit chattering phenomena with infinitely many switches,
necessarily with some very short arcs.
It is not clear how to deal with such problems with the method that we present in this article.
\end{remark}
}

The example of the regulator problem studied in Section
\ref{SecExamples} fullfils the above hypothesis \eqref{chypradn} (see
as well the example given in
\cite[Remark 2]{AronnaBonnansGoh2016}).


When a control satisfying hypothesis \eqref{chypradn}(i) is, for instance, a concatenation of a bang and a singular arc, we call it a {\em BS control.} This denomination is extended for any finite sequence of arc types.

In order to formulate our shooting algorithm, we express the control as a function of the state on $C$ arcs and we fix the control to its bounds on $B$ arcs.

\subsubsection{Expression of the control on constrained arcs}\label{SubsectionC}

From $g(\xh_t)=0$ on $C,$ we get
\be
 \label{dtg0}
 0 = \ddt g(\xh_t) = g'(\xh_t) \big( f_0(\xh_t) + \uh_t f_1(\xh_t) \big)\quad \text{on } C,
 \ee
 and, since \eqref{1order} holds,   we have that 
\be
 \label{uinC}
 \uh_t=  -\frac{g'(\xh_t)f_0(\xh_t)}{g'(\xh_t)f_1(\xh_t)}\quad \text{on } C.
 \ee

\section{Shooting formulation}\label{SecShooting}

We now explain how to state a transformed problem with neither control bounds nor running state constraints  that serves as an intermediate step to write a numerical scheme for problem \eqref{P}. Afterwards the optimality system of the transformed problem is reduced to a nonlinear equation in a finite dimensional space.

The starting point is to estimate the arc structure of the control, {\em  i.e.} the sequence of its different types of arcs and the approximate values of its junction times.
 This is done in practice by some {\em direct method} such as solving the nonlinear programming (NLP) associated to the discretization of the optimal control problem.
 Then we formulate a {\em transformed problem} in which the control is fixed to its bounds on B arcs, and is expressed as a function of the state on C arcs.  So the optimzation variables are now the control over singular arcs and the switching times. Subsequently, we express the optimality conditions of the transformed problem.  Finally, by eliminating the control as a function of the state and costate, we reduce the optimality system to a finite dimensional equation.

So,  let us assume for the remainder of the section that  $(\uh,\xh)$ 
is a Pontryagin minimum for \eqref{P}. 
Additionally, without loss of generality and for the sake of simplicity of notation, 
we set $u_{\min} :=0$ and $u_{\max} :=1.$  Recall further that $(\uh,\xh)$ complies with 
the structural hypotheses  \eqref{chypradn} for the control $\uh,$  
and that the state constraint is of first order,  {\em i.e.} \eqref{1order} holds true.

\subsection{The transformed problem}\label{SubsecReformulation}
We now state the transformed problem corresponding to \eqref{P}, in the
spirit of \cite{AronnaBonnansMartinon2013}, 
and we prove that any Pontryagin minimum for the original problem \eqref{P}
is transformed into a weak minimum of the unconstrained transformed problem.

For the Pontryagin minimum $(\uh,\xh)$, let 
\be
0=: \tauh_0 < \tauh_1 < \dots < \tauh_N := T
\ee
denote its associated switching times. Recall the definition of the sets $C,$ $B_-,$ $B_+$ and $S$ given in Section \ref{SecArcs} above. Set $\hat I_k := [\tauh_{k-1},\tauh_k]$  for $k=1,\dots,N,$ and
\be
\I(S) := \big\{{k \in \{1,\ldots ,N \}}: \hat I_k \text{ is a singular arc} \big\}.
\ee
Analogously, define $\I(C),$ $\I(B_-),$ and $\I(B_+).$ For each $k=1,\dots,N,$ consider a state variable $x^k \in W^{1,\infty}(0,T;\cR^n),$ and for each singular arc $k \in \I(S),$ a control variable $u^k \in L^\infty(0,T).$
On the set $B,$ we fix the control to the corresponding bound. Additionally, recall that from formula \eqref{uinC} we have that  $\uh_t=\Gamma(\xh_t)$ on $C,$ where $\Gamma$ is  given by 
$$
\Gamma(x):=  -\frac{g'(x)f_0(x)}{g'(x)f_1(x)}.
$$
After these considerations, we are ready to state the transformed problem. 
Define the optimal control problem (TP), on the time interval $[0,1],$ by
\begin{align}
& \label{costTP} \text{min } \phi(x^1_0,x^N_1), \\
& \label{eqxk1} \dot{x}^k = (\tau_k -\tau_{k-1}) \big(f_0(x^k) + u^k f_1(x^k)\big)\quad \text{for } k\in \I(S), \\
&  \dot{x}^k = (\tau_k -\tau_{k-1}) f_0(x^k)\quad \text{for } k\in  \I(B_-), \\
&  \dot{x}^k = (\tau_k - \tau_{k-1}) \big( f_0(x^k) + f_1(x^k) \big)\quad \text{for } k\in \I(B_+), \\
&  \label{eqxk4} \dot{x}^k = (\tau_k - \tau_{k-1}) \left(f_0(x^k) + \Gamma(x^k) f_1(x^k) \right)\quad \text{for } k\in \I(C), \\
& \dot\tau_k =0 \quad \text{for } k=1,\dots,N-1,\\
&\Phi(x_0^1,x_1^N) = 0,\\
& \label{gx0k} g(x_0^k) = 0 \quad  \text{for } k\in \I(C), \\
& \label{continuityxTP} x_1^k = x_0^{k+1} \quad \text{for } k=1,\dots,N-1.
\end{align}

\begin{remark}
Since we use the expression \eqref{uinC} to eliminate the control from the expression \eqref{dtg0} of the derivative of the state constraint equal to zero, we impose the entry conditions \eqref{gx0k} in the formulation of (TP)
 in order to guarantee that the state constraint is active along $x^k$ for every $k \in \I(C).$  
\end{remark}

Set 
\be
\label{changet}
\begin{split}
\xh^k_s &:= \xh \big(  \tauh_{k-1} + (\tauh_k - \tauh_{k-1})s \big) \quad \text{for}\ s\in [0,1] \text{ and } k=1,\dots,N,\\
\uh^k_s &:= \uh \big(  \tauh_{k-1} + (\tauh_k - \tauh_{k-1})s \big) \quad \text{for}\ s\in [0,1]  \text{ and }  k\in \I(S).
\end{split}
\ee

\begin{lemma}
\label{LemmaPontWeak}
Let $(\uh,\xh)$ be a Pontryagin minimum of problem \eqref{P}. Then 
the triple
$$
\Big(  (\uh^k)_{k \in \I(S)} , (\xh^k)_{k=1}^N,  (\tauh_k)_{k=1}^{N-1} \Big)
$$ 
is a weak solution of (TP).
\end{lemma}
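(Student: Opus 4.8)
The plan is to argue by contradiction: suppose the transformed trajectory $\big((\uh^k)_{k\in\I(S)},(\xh^k)_{k=1}^N,(\tauh_k)_{k=1}^{N-1}\big)$ is \emph{not} a weak minimum of (TP). Then there is a sequence of feasible points of (TP) converging to it in the $L^\infty$-norm of all the variables $(u^k,x^k,\tau_k)$ but with strictly smaller cost. The first step is to check that this transformed point is indeed feasible for (TP): the dynamics \eqref{eqxk1}--\eqref{eqxk4} on each arc follow from the change of time scale \eqref{changet} together with the fact that on $C$ arcs $\uh_t=\Gamma(\xh_t)$ by \eqref{uinC}, and on $B_\pm$ arcs $\uh_t$ equals the corresponding bound (which we have normalized to $0$ and $1$); the continuity conditions \eqref{continuityxTP} hold because $\xh$ is continuous; the entry conditions \eqref{gx0k} hold since the endpoints of $C$ arcs lie in $C$; and $\Phi(\xh_0,\xh_T)=0$ since $(\uh,\xh)$ is feasible for \eqref{P}.

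The core step is to convert an arbitrary feasible point of (TP) that is $L^\infty$-close to the transformed point back into a feasible trajectory of \eqref{P} that is close to $(\uh,\xh)$ in the Pontryagin sense \eqref{defminpont1}, with the same cost. Given $\big((u^k),(x^k),(\tau_k)\big)$ feasible for (TP) and close to the nominal one, define $\tau_0:=0$, $\tau_N:=T$, and on each interval $[\tau_{k-1},\tau_k]$ reverse the time change: set $\tilde x_t := x^k_{(t-\tau_{k-1})/(\tau_k-\tau_{k-1})}$ and define $\tilde u_t$ to be $u^k$ (rescaled) on singular arcs, the appropriate bound on $B_\pm$ arcs, and $\Gamma(\tilde x_t)$ on $C$ arcs. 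The continuity conditions \eqref{continuityxTP} guarantee $\tilde x\in W^{1,\infty}(0,T;\cR^n)$, and by construction $(\tilde u,\tilde x)$ solves \eqref{bsbstateeq} and satisfies $\Phi(\tilde x_0,\tilde x_T)=0$ and $\phi(\tilde x_0,\tilde x_T)=\phi(x^1_0,x^N_1)$. One then verifies feasibility for \eqref{P}: the control bounds \eqref{bsbcc} hold on $B$ arcs trivially, on $C$ arcs because $\Gamma(\tilde x_t)$ is close to $\Gamma(\xh_t)=\uh_t$ which by hypothesis \eqref{chypradn}(ii) is at uniformly positive distance from the bounds, and on $S$ arcs by the same uniform-distance hypothesis together with the $L^\infty$-closeness of $u^k$ to $\uh^k$; the state constraint \eqref{stateconstraint1} holds on $C$ arcs because $g(\tilde x_t)=0$ there (this uses \eqref{gx0k} at the entry, \eqref{dtg0}, and first order of the constraint \eqref{1order} so that $\tfrac{\dd}{\dd t}g(\tilde x_t)=0$ propagates), and on the $B$ and $S$ arcs because $g(\xh_t)<0$ strictly on the closure of each such arc away from the junctions, while near junctions one controls $g(\tilde x_t)$ using closeness of $\tilde x$ to $\xh$ and the sign behavior near the junction times. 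Since $\|\tilde x-\xh\|_\infty$ and $\|\tilde u-\uh\|_1$ are $O$ of the $L^\infty$-distance in the transformed variables (including the perturbation of the $\tau_k$), and $\|\tilde u-\uh\|_\infty$ stays bounded, $(\tilde u,\tilde x)$ meets \eqref{defminpont1} for any prescribed $M$; hence $\phi(\xh_0,\xh_T)\le\phi(\tilde x_0,\tilde x_T)$, contradicting the assumed strict decrease.

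The main obstacle is the state-constraint bookkeeping near the junction points: ensuring that for a nearby feasible point of (TP) the reconstructed state $\tilde x$ stays in $\{g\le 0\}$ on $B$ and $S$ arcs adjacent to a $C$ arc. Away from junctions this is immediate from strict negativity of $g(\xh)$ and uniform continuity, but in shrinking neighborhoods of the junction times one must use that $\xh$ enters/leaves $C$ transversally — which is where first order of the state constraint \eqref{1order} and the discontinuity hypothesis \eqref{chypradn}(iii) enter — so that the sign of $g$ is controlled by the (close) boundary data and the (close) dynamics. A secondary technical point is verifying that the map from transformed variables to $(\tilde u,\tilde x)$ is Lipschitz from $L^\infty$ into the $\|\cdot\|_\infty+\|\cdot\|_1$ topology uniformly in the bounded-$L^\infty$ regime, which follows from Gronwall estimates on \eqref{bsbstateeq} arc by arc, together with the smooth dependence of $\Gamma$ on $x$ guaranteed by \eqref{1order}.
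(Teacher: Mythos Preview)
Your proposal is correct and follows essentially the same approach as the paper: reconstruct from a nearby feasible point of (TP) a trajectory $(\tilde u,\tilde x)$ of \eqref{P}, verify the control bounds via hypothesis \eqref{chypradn}(ii), verify the state constraint on $C$ arcs by propagating \eqref{gx0k} through \eqref{dtg0}, handle $B$ and $S$ arcs by strict negativity away from junctions and by transversality (via \eqref{1order} and the control jump at junctions) near them, and invoke the Pontryagin minimality of $(\uh,\xh)$ through \eqref{defminpont1}. The contradiction wrapper and the explicit check that the nominal transformed point is feasible for (TP) are cosmetic additions; otherwise the argument and its technical ingredients coincide with the paper's direct proof.
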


\begin{proof}
Consider the feasible trajectories $\big( (u^k),(x^k),(\tau_k) \big)$  for (TP) satisfying
\begin{equation}
\label{estuk}
\| u^k - \uh^k \|_\infty < \bar\eps \quad \text{and} \quad   \lvert \tau_k - \hat{\tau}_k  \rvert  \leq \bar\delta \quad \text{for all } k=1,\dots,N,
\end{equation}
for some $\bar\eps,\bar\delta >0$ to be determined later. 
Set $I_k := [\tau_{k-1}, \tau_k]$ and consider the functions
$s_k \colon I_k \to [0,1]$ given by $s_{k,t} :=  \ds \frac{t-\tau_{k-1}}{\tau_k - \tau_{k-1}}.$ Define $u\colon[0,T] \to \cR$ by
\be
u_t :=
\left\{
\ba{cl}
\vspace{4pt} 0 & \text{if } t\in I_k,\, k\in \I(B_-), \\
\vspace{4pt}1 & \text{if } t\in I_k,\, k\in \I(B_+), \\
\vspace{4pt} \Gamma \big( x^k ( s_{k,t} ) \big) & \text{if } t\in I_k,\, k\in \I(C), \\
u^k(s_{k,t} ) & \text{if } t\in I_k,\, k\in \I(S).
\ea
\right.
\ee
Let $x \colon [0,T] \to \cR^n$ be the state corresponding to the control $u$ and the  initial condition $x(0)=x_0^1.$ 

We next  show that if $\bar\eps>0$ and $\bar\delta >0$ are small enough, then $(u,x)$ is  feasible for \eqref{P} and arbitrarily close to $(\uh,\xh)$ in the sense of
\eqref{defminpont1}. 
Observe that  $x(t) = x^k(s_{k,t})$ for all $k=1,\dots,N$ and $t\in I_k$. Hence, $x$ satisfies the endpoint constraints. Furthermore, due to Gronwall's Lemma, $(u,x)$ verifies the estimate
\be
\label{estux}
\|u-\uh\|_1 + \|x-\xh\|_\infty < \mathcal{O}(\bar\eps + \bar\delta).
\ee

Let us analyze the control constraints. Take $k=1,\dots,N.$ 
If $k\in \I(B_-) \cup \I(B_+),$ then $u_t \in \{0,1\}$ for a.a. $ t\in I_k.$ 
On the other hand, by the hypothesis \eqref{chypradn} on the control structure, there exists $\rho_1 > 0$ such that
\be
\label{bounduh}
\rho_1 < \uh_t < 1-\rho_1\quad \text{over  $C$ and $S$ arcs.}
\ee
  Suppose now that
$k\in \I(S).$ Then, in view of \eqref{estuk} and \eqref{bounduh}, we can see that
the control constraints hold on $I_k$
provided that $\bar\eps \leq \rho_1.$ Finally, let
$k$ be in $\I(C).$ Notice that in this case \eqref{bounduh} is equivalent to
\be
\rho_1 < \Gamma(\xh_t) < 1- \rho_1 \quad \text{ on } \hat{I}_k.
\ee
Hence, by standard continuity arguments and for $\bar\eps,\bar\delta$ sufficiently small, we get that
\be
0 < \Gamma (x_t) <1\quad \text{ on } I_k.
\ee 
We therefore confirm that $(u,x)$ verifies the control constraints.

Let us now consider the state constraint. Take first $k \in \I(C).$ Then $g(x_{\tau_k})= g(x_0^k)=0$ and, by definition of $(u,x),$ we have that $\ddt g(x_t) =0$ for all $t\in I_k.$ 
Therefore, $x$ satisfies the state constraint on $I_k$ for $k\in \I(C).$
Next, observe that, due to \eqref{chypradn}, for any $t\in [0,T]$ sufficiently far from a $C$ arc, one has $g(\xh_t) \leq -\rho$ for some small $\rho>0.$ 
Thus, by \eqref{estux} we get that $g(x_t)<0$ for appropriate $\bar\eps,\bar\delta.$ On the other hand, for $t\in [0,T]$ close to a $C$ arc, we reason as follows. Assume, without loss of generality, that $t$ is near an entry point $\tau_k$ of a C arc. In view of hypothesis \eqref{chypradn} and of the relation \eqref{dtg0}, we have that $ \frac{\rm d}{ {\rm d}  s} \rvert_{s=\hat\tau_k-} g(\xh_s) > 0,$ therefore, $ \frac{\rm d}{ {\rm d}  s} \rvert_{s=\tau_k-} g(x_s) > 0$ as well, if $\bar\eps,\bar\delta$ are sufficiently small. Consequently, $g(x_t) < 0.$ Hence, $x$ verifies the state constraint on $[0,T].$
With this, we conclude that $(u,x)$ is feasible for the original problem \eqref{P}.

Finally, given $M>0$ as in Definition \ref{defminpoint}, we can easily show that  $\bar\delta$ and $\bar\eps$ can be taken in such a way that $(u,x)$ satisfies \eqref{defminpont1} for such $M$ and the corresponding $\eps_M$ provided by the Pontryagin optimality of $(\uh,\xh).$ Consequently,
$
\phi(x_0,x_1) \geq \phi(\xh_0,\xh_1)
$
or, equivalently,
\be
\phi(x^1_0,x^N_1) \geq \phi(\xh^1_0,\xh^N_1),
\ee
which proves that $\big(  (\uh^k)_{k \in \I(S)},(\xh^k)_{k=1}^N,  (\tauh_k)_{k=1}^{N-1} \big)$ is a weak solution of (TP), as desired. This concludes the proof.
\end{proof}

\subsection{The shooting function}
We shall start by rewriting the problem (TP) in the following compact form, in order to ease the notation,
\begin{align}
\label{TPcost}  & \min\,\,  \tilde \phi(X_0,X_1),\\
\label{TPdyn} & \dot{X} = \tilde f_0(X) + \sum_{k\in \I(S)} U^k \tilde f_k(X),\\
\label{TPfinal} & \tilde \Phi (X_0,X_1)=0,
\end{align}
where $X:= \big( (x^k)_{k=1}^N,(\tau_k)_{k=1}^{N-1} \big),$ $U:=(u^k)_{k\in \I(S)},$ the vector field $ \tilde f_0: \cR^{Nn+N-1} \to  \cR^{Nn+N-1}$ is defined as follows, 
\benl
\begin{split}
\big( \tilde f_0(X) \big)&_{i=(k-1)n+1}^{kn}\\
&:=
\left\{
\ba{cl}
(\tau_k-\tau_{k-1})f_0(x^k) &\text{ for } k\in \I(S) \cup \I(B_-),\\
(\tau_k-\tau_{k-1}) \big(f_0(x^k)+f_1(x^k) \big) &\text{ for } k\in \I(B_+),\\
(\tau_k-\tau_{k-1}) \left( f_0(x^k)+\Gamma(x^k)  f_1(x^k) \right) &\text{ for } k\in \I(C),
\ea
\right.
\end{split}
\eenl
and $\big( \tilde f_0(X) \big)_{i=nN+1}^{Nn+N-1}:=0.$ Additionally,
for $k\in \I(S)$ the vector field $\tilde f_k: \cR^{Nn+N-1} \to  \cR^{Nn+N-1}$ is given by 
$$
\big( \tilde f_k (X) \big)_{i=(k-1)n+1}^{kn} := (\tau_k-\tau_{k-1})f_1(x^k),
$$
and $\big( \tilde f_k (X) \big)_{i}:=0$ for the remaining index $i,$
the new cost $\tilde\phi:\cR^{2(Nn+N-1)} \to \cR$ is  
$$
\tilde\phi (X_0,X_1)  :=\phi(x_0^1,x_1^N),
$$
and the function $\tilde \Phi: \cR^{2(Nn+N-1)} \to \cR^{ {\rm d}_{\tilde \Phi}}$ with ${\rm d}_{\tilde \Phi}:=q+\lvert \I(C) \rvert+n(N-1)$ is defined as
$$
\tilde\Phi(X_0,X_1):=
\begin{pmatrix}
\Phi(x_0^1,x^N_1) \\
\big( g(x_0^k) \big)_{k \in \I(C)} \\
\big( x_1^k-x_0^{k+1} \big)_{k=1}^{N-1}
\end{pmatrix}.
$$

The pre-Hamiltonian for problem (TP) is given by
\be
\tilde H =  P \Big(  \tilde{f}_0(X) +  \sum_{k\in \I(S)} U^k \tilde{f}_k(X) \Big) = \sum_{k=1}^N (\tau_k - \tau_{k-1})H^k,
\ee
where $P$ denotes the costate associated to (TP), \be
\label{Hk}
H^k:= p^k \big( f_0(x^k)+w^k f_1(x^k) \big),
\ee
with   the notation $w^k$ defined as
\be
\label{wk}
w^k := \left\{
\ba{cl}
u^k & \quad \text{if } k\in \I(S),\\
0 & \quad \text{if } k\in \I(B_-),\\
1 & \quad \text{if } k\in \I(B_+),\\
\Gamma(x^k) & \quad \text{if } k\in \I(C),
\ea
\right.
\ee
and $p^k$ denotes the $n$-dimensional vector of components $P_{(k-1)n+1},\dots,P_{kn}.$ Note  that  $w^k$ is a variable  only  for $k\in  \I(S)$, in  which case it represents the control $u^k$.

\subsection{Constraint qualification and first order optimality condition for (TP)}
Since problem (TP) has only endpoint equality constraints,
and the Hamiltonian is an affine function of the control, it is known that Pontryagin's Maximum Principle is equivalent to the first-order optimality conditions. So, the qualification condition is that the derivative of the constraint is onto at the nominal trajectory $(\hat U,\hat X)$, see {\em e.g.} \cite[Ch. 3]{PAOP}. 
This means that 
\be 
\begin{split}
\bar\Phi :  \,\cR^{Nn+N-1} \times (L^\infty)^{\lvert \I(S) \rvert}  \to \cR^{{\rm d}_{\tilde \Phi}}, 
\quad
 (X_0,U)  \mapsto \tilde \Phi(X_0,X_1),
\end{split}
\ee
where $X_t$ is the solution of \eqref{TPdyn} associated to $(X_0,U)$ is such that 
\be
\label{CQ2}
D\bar\Phi(\hat X_0,\hat U) \text{ is surjective.}
\ee
Under this hypothesis, the first-order optimality condition in normal form is as follows,
defining the endpoint Lagrangian associated to (TP) by:
\be
\tilde \ell^\Psi := \phi(x_0^1,x_1^N) + \sum_{j=1}^{q} \Psi_j \Phi_j(x_0^1,x_1^N) 
+ \sum_{k\in \I(C)} \gamma_k g(x_0^k) + \sum_{k=1}^{N-1} \theta_k(x_1^k-x_0^{k+1}).
\ee

\begin{theorem}
Let $(\hat U,\hat X)$ be a weak solution for (TP) satisfying the qualification condition \eqref{CQ2}. Then there exists a unique $\tilde \lambda := (\tilde \Psi,P) \in \cR^{{\rm d}_{\tilde\Phi}*}\times (W^{1,\infty})^{Nn+N-1*}$ such that $P$ is solution of 
\be
-\dot{P}_t  = D_X \tilde H(\hat U_t,\hat X_t,P_t) \quad \text{a.e. on } [0,T],
\ee
with {\em transversality conditions}
\be
\begin{split}
P_0 & = -D_{X_0} \tilde\ell^{\tilde \Psi} (\hat X_0,\hat X_1),\\
P_1 & = D_{X_T} \tilde \ell^{\tilde \Psi} (\hat X_0,\hat X_1),
\end{split}
\ee
and with
\be
\label{stationarity1}
\tilde H_U (\hat U_t,\hat X_t,P_t)=0.
\ee
\end{theorem}

{
\begin{proof}
This is a variant of
\cite[Theorem 1.174]{BonnansCSO}, where the cost function was supposed to be convex: one easily checks that the 
proof is essentially the same if the cost is
differentiable. See also \cite{ArutyunovKaramzin2020}.
\end{proof}
}

Since there is a unique associated multiplier, we omit from now on the dependence on $\tilde \lambda$ for the sake of simplicity of the presentation. Moreover, in some ocassions, we omit the dependence on the nominal solution $(\hat U,\hat X).$

\subsection{Expression of the singular controls in problem (TP)}\label{SubsectionSingular}

{ It is known that in this control-affine case, the control variable does not appear explicitly neither in the expression of $\tilde{H}_U$ nor in its time
derivative $\dot{\tilde{H}}_U$ (see {\em e.g.} \cite{Rob67,AronnaBonnansMartinon2013}).
The  {\em strengthened generalized Legendre-Clebsch condition} \cite{Rob67} for (TP) reads
\be
\label{LC}
-\frac{\partial}{\partial U } \ddot{\tilde H}_{U} \succ 0.
\ee
Here $A \succ B$, where $A$ and $B$ are symmetric matrices of same size, means that
$A-B$ is positive semidefinite.
At this point, recall the definitions of $H^k$ and $p^k$ given in \eqref{Hk} and in the first line after \eqref{wk}, respectively.
Simple calculations show that the l.h.s. of \eqref{LC} is a $\lvert \I(S) \rvert \times \lvert \I(S) \rvert$-diagonal matrix with positive entries equal to
\be
-(\tau_k-\tau_{k-1}) \frac{\partial}{\partial {u^k} } \ddot{H}^k_{u^k}\quad \text{for } k\in \I(S).
\ee
Then condition \eqref{LC} becomes 
\be
\label{LCa}
\frac{\partial}{\partial {u^k} }\ddot{H}^k_{u^k} < 0\quad \text{for } k\in \I(S).
\ee
Hence, thanks to \eqref{LCa}, for each $k\in \I(S)$ one can compute the control $u^k$ from the identity
\be
\label{ddotHuk}
\ddot{H}^k_{u^k}=0.
\ee
Apart from the previous equation \eqref{ddotHuk}, in order to ensure the stationarity $H^k_{u^k}=0,$ we add the following endpoint conditions:
\be 
0 = H^k_{u^k} (0) = p_0^k f_1(x_0^k),\quad 0=\dot{H}^k_{u^k}(0) = p_0^k [f_1,f_0](x_0^k)\quad \text{for } k\in \I(S).
\ee

\subsection{Lagrangians and costate equation}

The costate equation for $p^k$ is  
\be
\label{eqpk}
\dot{p}^k = -(\tau_k-\tau_{k+1}) D_{x^k}H^k,
\ee 
with endpoint conditions
\be
p_0^1=-D_{x_0^1} \tilde \ell^\Psi = -D_{x_0^1}\phi - \sum_{j=1}^{q} \Psi_j D_{x_0^1} \Phi_j - \chi_{\I(C)}(1) \gamma_1 g'(x_0^1),
\ee
\begin{gather}
\label{pk1} p_1^k = \theta^k \quad \text{for } k=1,\dots,N-1,\\
\label{pk0} p_0^k = \theta^{k-1} - \chi_{\I(C)}(k) \gamma_k g'(x_0^k) \quad \text{for } k=2,\dots,N,
\end{gather}
\be
p_1^N = D_{x_1^N} \phi + \sum_{j=1}^{q} \Psi_j D_{x_1^N} \Phi_j,
\ee
where $\chi_{\I(C)}$ denotes the {\em characteristic function} associated to the set $\I(C).$
For the costate $p^{\tau_k}$ we have the dynamics
\be
\label{pk}
\dot{p}^{\tau_k} = -H^k+H^{k+1},\quad  p_0^{\tau_k}=0,\  p_1^{\tau_k}=0\qquad \text{for } k=1,\dots,N-1.
\ee
It is known that the pre-Hamiltonian of autonomous problems has a constant value along an optimal solution (see {\em e.g.} 
\cite{Vinter2000}). By similar arguments it is easily seen that each $H^k$ is a constant function of time along an optimal solution.
Consequently, from \eqref{pk} we get that $p^{\tau_k}$ vanishes identically and that
\be
H_1^k = H_0^{k+1} \quad \text{for } k=1,\dots,N-1.
\ee

\subsection{The shooting function and method}

The shooting function associated with (TP) that we propose here is
\be
\begin{split}
 \SS : \cR^{Nn+N-1} \times \cR^{Nn+q+\lvert \I(C) \rvert,*} &\to \cR^{(N-1)n+N-1 +q+\lvert \I(C) \rvert+ 2\lvert \I(S) \rvert} \times \cR^{(N+1)n,*},\\
\big( (x_0^k),(\tau_k),(p_0^k),\Psi,\gamma  \big) &\mapsto
\begin{pmatrix}
\Phi(x_0^1,x_1^N) \\
\big( g(x_0^k) \big)_{k\in \I(C)} \\
(x^k_1 - x^{k+1}_0)_{k=1,\dots,N-1} \\
p^1_0 + D_{x_0^1} \tilde\ell^\Psi \\
p_1^k-p_0^{k+1}-\chi_{\I(C)}(k) \gamma_k g'(x_0^k) \\
p^q-D_{x^N_1} \tilde \ell^\Psi \\
(H^k_1-H^{k+1}_0)_{k=1,\dots,N-1} \\
\big( p^k_0 f_1(x^k_0) \big)_{k\in \I(S)} \\
\big( p_0^k [f_1,f_0](x^k_0) \big)_{k \in \I(S)}
\end{pmatrix},
\end{split}
\ee
where $\big( (x^k),(p^k) \big)$ is the solution of the state and costate equations \eqref{eqxk1}-\eqref{eqxk4}, \eqref{eqpk} with initial values $(x^k_0),(p^k_0),$ and control  $(u^k)_{k\in \I(S)},$  given by the stationarity condition \eqref{ddotHuk}. Note that we removed the variable $\theta$ by combining equations \eqref{pk1} and \eqref{pk0}.

The key feature of this procedure is that $\omega := \big( (x_0^k),(\tau_k),(p_0^k),\Psi,\gamma\big)$ satisfies 
\be
\label{S=0}
\SS(\omega)=0,
\ee
if and only if the associated solution $\big( (x^k),(p^k),(u^k) \big)$ verifies the Pontryagin's Maximum Principle for (TP). Briefly speaking, in order to find the candidate solutions of (TP), we shall solve \eqref{S=0}.

Let us observe that the system \eqref{S=0} has $2Nn+N-1+ q+\lvert \I(C) \rvert$ unknowns and $2Nn+N-1+ q +\lvert \I(C) \rvert+ 2\lvert \I(S) \rvert$ equations. Hence, as soon as a singular arc occurs, \eqref{S=0} has more equations than unknowns, {\em i.e.} it is overdetermined. 
We then follow \cite{AronnaBonnansMartinon2013}, where the authors suggested to solve the shooting equations by the Gauss-Newton method. 
We recall the following convergence result for Gauss-Newton, see  {\em e.g.} Fletcher \cite{fletcher2013practical}, or alternatively Bonnans \cite{bonnans2006numerical}.
If $F$ is a $C^1$ mapping from $\RR^n$ to $\RR^p$ with $p>n$, the Gauss-Newton method computes a sequence 
$(y^j)$ in $\RR^n$ satisfying $F(y^j)+DF(y^j)(y^{j+1}-y^j)=0$.
When $F$ has a zero at $\yb$ and $DF(\yb)$ is onto, the sequence  $(y^j)$ is well-defined provided that 
the starting point $y^0$ is close enough to $\yb$ and in  that case, $(y^j)$ converges superlinearly to $\yb$
(quadratically if $DF$ is Lipschitz near $\yb$).
  In view of the regularity hypotheses done in Section \ref{SecProblem}, we know that $\SS'$ is Lipschitz continuous.

\section{Sufficient condition for the convergence of the shooting algorithm}\label{SecSuff}
The main result of this article is Theorem \ref{ThConvergence} of current section. It gives a sufficient condition for the local convergence of the shooting algorithm, that is also a sufficient condition for weak optimality of problem (TP), as stated in Theorem \ref{SSC} below.

\subsection{Second order optimality conditions for (TP)} 

We now recall some second order optimality conditions for (TP). Let us consider the quadratic mapping on the space $ (L^\infty)^{\lvert \I(S) \rvert} \times (W^{1,\infty})^{Nn+N-1},$ defined as
\be
\tilde Q(V,Z) := \half D^2 \tilde\ell(Z_0,Z_1)^2 + \half \int_0^1 \big[ Z^\top  \tilde{H}_{XX} Z + 2 V \tilde H_{UX} Z \big] \dtt.
\ee
We next introduce the {\em critical cone} associated to (TP).
Since the problem has only qualified equality constraints, this critical cone coincides with the tangent space to the constraints.
Consider first the linearized state equation
\be
\label{LINEQ}
\dot Z = \tilde A Z + \tilde B V\quad \text{a.e. on } [0,1],
\ee
where $F(U,X):= \tilde f_0(X) + \sum_{k\in \I(S)} U^k \tilde f_k(X),$ $\tilde A := F_X,$ $\tilde B:= F_U;$
and let the linearization of the endpoint constraints be given by
\be
\label{LINCONS}
D\tilde \Phi (Z_0,Z_1) = 0.
\ee
The {\em critical cone}  for (TP) is defined as
\be
\tilde\C := \Big\{(V,Z) \in (L^\infty)^{\lvert \I(S) \rvert} \times (W^{1,\infty})^{Nn+N-1} : \text{\eqref{LINEQ}-\eqref{LINCONS} hold} \Big\}.
\ee
The following result follows (see {\em e.g.} \cite{LMO,ABDL12} for a proof).
\begin{theorem}[Second order necessary condition]
If $(\hat U,\hat X)$ is a weak minimum for (TP) that verifies \eqref{CQ2}, then
\be
\label{SONCineq}
\tilde Q(V,Z) \geq 0\quad \text{for all } (V,Z) \in \tilde\C.
\ee
\end{theorem}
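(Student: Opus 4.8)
The plan is to recognize that (TP), as written in the compact form \eqref{TPcost}--\eqref{TPfinal}, is a smooth Mayer-type optimal control problem with finitely many endpoint equality constraints, no control bounds, and no running state constraints, whose data ($\tilde\phi$, $\tilde\Phi$, $\tilde f_0$, $\tilde f_k$) are $C^2$ by the regularity hypotheses of Section \ref{SecProblem} (note that $\Gamma$ is $C^\infty$ wherever $g'(x)f_1(x)\neq0$, which holds on a neighborhood of each $\xh^k$ for $k\in\I(C)$ thanks to \eqref{1order}). For such problems the classical second-order necessary condition is standard, so the real content of the proof is merely to check that the abstract framework applies and to invoke it. First I would set up the abstract optimization problem $\min\{\tilde\phi(X_0,X_1) : G(U,X_0)=0\}$ over $(U,X_0)\in(L^\infty)^{|\I(S)|}\times\cR^{Nn+N-1}$, where $G(U,X_0):=\tilde\Phi(X_0,X_1)$ with $X$ the solution of \eqref{TPdyn}; equivalently one works with $\bar\Phi$ as already introduced before \eqref{CQ2}. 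The qualification hypothesis \eqref{CQ2} says precisely that $DG$ is surjective at $(\hat U,\hat X_0)$, so Robinson's constraint qualification holds and the problem is qualified.

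The key steps, in order, are: (i) observe that a weak minimum $(\hat U,\hat X)$ of (TP) is a local minimum of this abstract problem in the $(L^\infty)\times\cR$-topology; (ii) apply the first-order optimality conditions — already stated in the theorem preceding Section \ref{SecSuff} — to obtain the unique multiplier $\tilde\lambda=(\tilde\Psi,P)$ and the costate $P$; (iii) write down the Lagrangian $\mathcal{L}(U,X_0):=\tilde\ell^{\tilde\Psi}(X_0,X_1)$, whose second derivative at $(\hat U,\hat X_0)$, restricted to the kernel of $DG(\hat U,\hat X_0)$, must be nonnegative by the standard second-order necessary condition for qualified equality-constrained problems (see the references \cite{LMO,ABDL12} cited in the statement); (iv) identify, via the usual computation, that $D^2\mathcal{L}(\hat U,\hat X_0)[(V,Z_0)]^2$ evaluated along the linearized dynamics \eqref{LINEQ} equals exactly $2\tilde Q(V,Z)$ — this is the bookkeeping that the Pontryagin/Hamiltonian form of the Hessian of the Lagrangian coincides with the integral-plus-boundary quadratic form $\tilde Q$, using integration by parts against the costate equation $-\dot P = D_X\tilde H$ and the transversality conditions; and (v) note that the kernel of $DG(\hat U,\hat X_0)$, pushed forward through the state equation, is precisely the critical cone $\tilde\C$ defined by \eqref{LINEQ}--\eqref{LINCONS}, since the constraints are all equalities and the problem is qualified so no sign conditions or strict-complementarity issues arise. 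Combining (iii)--(v) gives $\tilde Q(V,Z)\geq 0$ for all $(V,Z)\in\tilde\C$, which is \eqref{SONCineq}.

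The main obstacle — really the only non-formal point — is step (iv): verifying that the abstract Hessian of the Lagrangian, which a priori is expressed in terms of second derivatives of the composed map $X_0\mapsto X_1$, reduces after integration by parts to the clean expression $\tilde Q(V,Z)=\tfrac12 D^2\tilde\ell(Z_0,Z_1)^2+\tfrac12\int_0^1[Z^\top\tilde H_{XX}Z+2V\tilde H_{UX}Z]\,\dtt$ with no $\tilde H_{UU}$ term. The absence of the $\tilde H_{UU}$ term is not an accident: $\tilde H$ is affine in $U$ (the controls $u^k$ enter $\tilde f_k$ linearly), so $\tilde H_{UU}\equiv 0$, and this is exactly why the second variation takes the degenerate singular-arc form above. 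I would carry out this reduction by the standard argument: differentiate the state equation twice to get the second-order variation $Z^{(2)}$ of the trajectory, express $D^2(\tilde\phi+\tilde\Psi\tilde\Phi)$ in terms of $Z_0,Z_1,Z^{(2)}_1$, and eliminate $Z^{(2)}$ by pairing the second-variation equation with $P$ and integrating by parts, using $-\dot P=P\tilde A + (\text{second-order terms})$ and the boundary conditions; the first-order stationarity \eqref{stationarity} $\tilde H_U=0$ is what kills the cross terms involving $Z^{(2)}$. Since all of this is entirely classical, the cleanest exposition is to state that the result is a direct specialization of the cited general theory to the present smooth qualified equality-constrained problem, and refer to \cite{LMO,ABDL12} for the detailed computation; I would include at most a one-line indication of the integration-by-parts identity that produces $\tilde Q$.
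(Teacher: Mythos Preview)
Your proposal is correct and follows the standard route that the paper has in mind: the paper does not actually prove this theorem but simply writes ``The following result follows (see \emph{e.g.} \cite{LMO,ABDL12} for a proof)'' and moves on. Your outline --- recognize (TP) as a smooth qualified equality-constrained Mayer problem, invoke the abstract second-order necessary condition on the kernel of the linearized constraints, and identify the Hessian of the Lagrangian with $\tilde Q$ via the integration-by-parts against the costate equation (noting $\tilde H_{UU}\equiv 0$ by affinity in $U$) --- is precisely the content of those references, so there is nothing to add.
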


In the sequel we present some optimality conditions for (TP). The first one is a necessary condition due to Goh \cite{Goh66} and the second one, a sufficient condition from Dmitruk \cite{Dmi77,Dmi87}. 
The idea behind these results lies on the following observation. Note that $\tilde{H}_{UU}$ vanishes and, therefore, the quadratic mapping $\tilde Q$ does not contain a quadratic term on the control variation $V.$ 
Consequently, the Legendre-Clebsch necessary optimality condition on the positive semidefiniteness of $\tilde{H}_{UU}$ holds trivially and  a second order sufficient condition cannot be obtained by  strengthening inequality \eqref{SONCineq}. 
In order to overcome this issue and derive necessary conditions for this singular case, Goh introduced a change of variables in \cite{Goh66a} and
applied it to derive necessary conditions in \cite{Goh66}. Some years later, Dmitruk \cite{Dmi77} showed a second order sufficient condition in terms of the coercivity of the transformation  $\tilde \Omega$ of $\tilde Q$ introduced below.

The {\em Goh transformation} for the linear system \eqref{LINEQ} is given by
\be
\label{GOH}
Y_t: = \int_0^t V_s {\rm d} s,\qquad \Xi_t:= Z_t - \tilde B_t Y_t.
\ee 
Notice that if $(V,Z) \in \tilde \C,$ then $(Y,\Xi)$ defined by the above transformation \eqref{GOH} 
satisfies (removing time indexes):
\begin{equation}
\dot  \Xi = A Z + \tilde B U - \tilde B U - \dot{ \tilde B} Y
=
A (\Xi + \tilde B Y) - \dot{ \tilde B} Y,
\end{equation}
and, therefore $\Xi$ 
is solution of the {\em transformed linearized equation} 
\be
\label{LINEQGOH}
\dot\Xi = \tilde A \Xi + \tilde{E} Y,
\ee
{where $\tilde E := \tilde A \tilde B - \ddt \tilde B,$} and $\Xi$ satisfies the {\em transformed linearized endpoint constraints}
\be
\label{LINCONSGOH}
D\tilde \Phi(\Xi_0,\Xi_1+\tilde B_1 h)=0,
\ee
where we set $h:= Y_1.$

Consider the function
\be
\rho(\zeta_0,\zeta_1,h) := D^2 \tilde\ell (\zeta_0,\zeta_1+\tilde B_1 h)^2 + h\tilde H_{UX,1} (2\zeta_1+\tilde B_1 h),
\ee
and the quadratic mapping
\be
\tilde\Omega (Y,\tilde h,\Xi) := \half \rho(\Xi_0,\Xi_1,\tilde h) + \half \int_0^1 \left( \Xi^\top \tilde{H}_{XX} \Xi + 2Y \tilde M \Xi + Y \tilde{R} Y\right) \dtt,
\ee
for $(Y,\tilde h,\Xi) \in  (L^2)^{\lvert \I(S) \rvert} \times \cR \times (H^1)^{Nn+N-1}$ and
\be
\tilde M:= \tilde{f}_1^\top \tilde{H}_{XX}  - \ddt \tilde{H}_{UX} -\tilde{H}_{UX} \tilde{A},
\quad
\tilde R:= \tilde{f}_1^\top \tilde{H}_{XX} \tilde{f}_1-2 \tilde{H}_{UX}\tilde{E}- \ddt (\tilde{H}_{UX} \tilde{f}_1).
\ee

Let us recall that the second order necessary condition for optimality stated by Goh \cite{Goh66} (and nowadays known as {\em Goh condition}) implies that if $(\hat U,\hat X)$ is a weak minimum for (TP) verifying \eqref{CQ2}, then 
\be
\label{CB}
\tilde{H}_{UX}\tilde B \text{ is symmetric,}
\ee
or, equivalently, $P\cdotp  D\tilde f_i  \tilde f_j= P \cdotp D \tilde f_j \tilde f_i$ for all $i,j=1,\dots,N.$ In the recent literature, this condition can be encountered as $P \cdotp [\tilde f_i,\tilde f_j]=0.$ 
We shall mention that this necessary condition was first stated by Goh in \cite{Goh66} for the case with neither control nor state constraints, and extended in \cite{ABDL12,FraTon13} for problems containing control constraints. 

Notice that when the control  variable of (TP) is scalar ({\em i.e.} $\lvert \I(S) \rvert=1$), then \eqref{CB} is trivially verified since $\tilde{H}_{UX}\tilde B$ is also a scalar. 
\if{
Furthermore, given the special structure of the dynamics of (TP), we can see that $D \tilde f_i\, \tilde f_j =0$ for all $i,j \in \{1,\dots,N\}$ with $ i\neq j $ and, consequently, the matrix in \eqref{CB} is diagonal and the Goh condition holds trivially for (TP) even when $\lvert \I(S) \rvert>1$. 
} \fi

{
We claim that $( Df_i ) f_j=0$, for all $i,j 
\in \{1, \ldots ,N\}$ with $i\neq j$ and, consequently,
the matrix in (68) is diagonal and the Goh condition holds trivially for (TP) even when ${\mathcal I}(S) >1$.
Indeed, let $k$, $\ell$ be disjoint elements of 
${\mathcal I}(S)$.
By the definition, the nonzero components of 
$\tilde f_k(X)$ have coordinates in the set
$I_k := \{ (k-1)n+1, kn\}$.
So, only the rows of 
$D \tilde f_k(X)$ in $I_k$ can be nonzero.
However, since
$I_k\cap I_\ell$ is empty,
$\tilde f_\ell$ has only zero 
components in $I_k$. 
Our claim follows.
}

From \eqref{CB} and \cite[Theorem 4.4]{ABDL12} we get the following result:

\begin{proposition}
For all $(V,Z) \in (L^\infty)^{\lvert \I(S) \rvert} \times (W^{1,\infty})^{Nn+N-1}$ solution of \eqref{LINEQ} and $(Y,\Xi)$ given by the Goh transform \eqref{GOH}, it holds
\benl
\tilde Q (V,Z) = \tilde \Omega (Y,Y_T,\Xi).
\eenl
\end{proposition}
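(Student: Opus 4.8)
The plan is to obtain the identity exactly as in \cite[Theorem 4.4]{ABDL12}, namely by applying the Goh transformation \eqref{GOH} to $\tilde Q$ and integrating by parts. In fact the statement is an immediate corollary of that theorem once its hypotheses are verified, so the shortest route is simply to invoke \cite[Theorem 4.4]{ABDL12}, observing that, beyond the qualification condition \eqref{CQ2}, its only structural requirement is the Goh condition \eqref{CB}, which — as already noted in the text — holds trivially for (TP) since $\tilde H_{UX}\tilde B$ is diagonal. For completeness I would also include the self-contained computation, sketched below; throughout, $\tilde f_1$ in the definitions of $\tilde M,\tilde R,\tilde E$ is understood as $\tilde B = F_U$.

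First I would fix $(V,Z)$ solving \eqref{LINEQ}, set $Y_t := \int_0^t V_s\,\dd s$, $\Xi := Z - \tilde B Y$, $h := Y_1 = Y_T$, and record $Y_0 = 0$, so that $Z_0 = \Xi_0$ and $Z_1 = \Xi_1 + \tilde B_1 h$. This already turns the endpoint term $\half D^2\tilde\ell(Z_0,Z_1)^2$ of $\tilde Q$ into $\half D^2\tilde\ell(\Xi_0,\Xi_1 + \tilde B_1 h)^2$, the first summand of $\rho$. Next I would substitute $Z = \Xi + \tilde B Y$ and $V = \dot Y$ into the integrand of $\tilde Q$ and expand $Z^\top \tilde H_{XX} Z + 2V\tilde H_{UX}Z$ into the five terms $\Xi^\top\tilde H_{XX}\Xi$, $2Y^\top\tilde B^\top\tilde H_{XX}\Xi$, $Y^\top\tilde B^\top\tilde H_{XX}\tilde B Y$, $2\dot Y\tilde H_{UX}\Xi$ and $2\dot Y\tilde H_{UX}\tilde B Y$.

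The core of the argument is the integration by parts of the two terms carrying $\dot Y$. For $\int_0^1 2\dot Y\tilde H_{UX}\Xi\,\dd t$ I would use $Y_0 = 0$ together with the transformed linearized equation \eqref{LINEQGOH}, $\dot\Xi = \tilde A\Xi + \tilde E Y$ (with $\tilde E = \tilde A\tilde B - \ddt\tilde B$), to trade $\dot Y$ for $Y$ at the cost of the boundary term $2h\tilde H_{UX,1}\Xi_1$ and the extra integrand contributions $-2Y(\ddt\tilde H_{UX} + \tilde H_{UX}\tilde A)\Xi$ and $-2Y\tilde H_{UX}\tilde E Y$. For $\int_0^1 2\dot Y\tilde H_{UX}\tilde B Y\,\dd t$ I would invoke the Goh condition \eqref{CB}: the symmetry of $\tilde H_{UX}\tilde B$ makes the integrand a perfect time derivative up to a lower-order term, $2\dot Y\tilde H_{UX}\tilde B Y = \ddt\!\big(Y^\top\tilde H_{UX}\tilde B\,Y\big) - Y^\top\ddt(\tilde H_{UX}\tilde B)Y$, producing the boundary term $h\tilde H_{UX,1}\tilde B_1 h$ and the integrand contribution $-Y^\top\ddt(\tilde H_{UX}\tilde B)Y$. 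Finally I would collect terms: the boundary contributions assemble as $h\tilde H_{UX,1}(2\Xi_1 + \tilde B_1 h)$, which together with $\half D^2\tilde\ell(\Xi_0,\Xi_1+\tilde B_1 h)^2$ gives exactly $\half\rho(\Xi_0,\Xi_1,h)$; the $\Xi$-quadratic term is untouched; the $Y$–$\Xi$ mixed terms sum to $2Y\tilde M\Xi$ with $\tilde M = \tilde B^\top\tilde H_{XX} - \ddt\tilde H_{UX} - \tilde H_{UX}\tilde A$; and the $Y$-quadratic terms sum to $Y\tilde R Y$ with $\tilde R = \tilde B^\top\tilde H_{XX}\tilde B - 2\tilde H_{UX}\tilde E - \ddt(\tilde H_{UX}\tilde B)$. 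This is precisely $\tilde\Omega(Y,h,\Xi) = \tilde\Omega(Y,Y_T,\Xi)$.

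The only genuinely delicate point is the term $\int\dot Y\tilde H_{UX}\tilde B Y\,\dd t$, which does not integrate by parts cleanly unless $\tilde H_{UX}\tilde B$ is symmetric; here this is automatic by \eqref{CB} and the block structure $D\tilde f_i\,\tilde f_j = 0$ for $i\neq j$ of the dynamics of (TP), so no additional hypothesis is needed. The rest is purely a bookkeeping of boundary terms and of the coefficients of $Y\Xi$ and $YY$, matched against the definitions of $\tilde M$ and $\tilde R$.
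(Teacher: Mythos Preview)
Your proposal is correct and matches the paper's own approach exactly: the paper derives the proposition directly from the Goh condition \eqref{CB} together with \cite[Theorem 4.4]{ABDL12}, precisely as you do. Your additional self-contained integration-by-parts sketch is a faithful expansion of that cited result and is not needed, but it is accurate.
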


Define, for $(\Xi_0,Y,\tilde h) \in \cR^n \times  (L^2)^{\lvert \I(S) \rvert} \times \cR,$ the {\em order function}
\be
\gamma(\Xi_0,Y,\tilde h) := \lvert \Xi_0 \rvert^2 + \int_0^1 \lvert Y_t \rvert^2 \dtt + \lvert  \tilde h \rvert ^2.
\ee

\begin{definition}[$\gamma$-growth]
A feasible trajectory $(\hat U,\hat X)$ of (TP) satisfies the {\em
  $\gamma$-growth condition in the weak sense} if there exists a
positive constant $c$ such that, for every sequence of feasible
variations $\big\{ (\delta X^k_0,V^k) \big\}_k$ converging to 0 in
$\cR^{Nn+N-1} \times ( L^\infty)^{\lvert \I(S) \rvert}$,
one has that
\be
\tilde \phi(X_0^k,X_1^k) - \tilde \phi(\hat X_0,\hat X_1) \geq c \gamma(\Xi^k_0,Y^k,Y^k_T),
\ee
for $k$ large enough, where $(Y^k,\Xi^k)$ are given by Goh transform \eqref{GOH} and $X^k$ is the solution of the state equation \eqref{TPdyn} associated to $(\hat{X}_0+\delta X_0^k,\hat{U}+V^k).$
\end{definition} 

Consider the {\em transformed critical cone}
\be
\tilde\P^2_S := \left\{ (Y,\tilde h,\Xi) \in(L^2)^{\lvert \I(S) \rvert} \times \cR \times (H^1)^{Nn+N-1}:  \text{\eqref{LINEQGOH}-\eqref{LINCONSGOH} hold} \right\}.
\ee

The following characterization of $\gamma$-growth holds (see \cite{Dmi77} or \cite[Theorem 3.1]{Dmi87} for a proof).

\begin{theorem}
\label{SSC}
Let $(\hat U,\hat X)$ be such that the qualification condition \eqref{CQ2} holds. Then $(\hat U,\hat X)$ is a weak minimum of (TP) that satisfies $\gamma$-growth in the weak sense if and only if \eqref{CB} holds and there exists $c>0$ such that
\be
\label{POS}
\tilde \Omega(Y,\tilde h,\Xi) \geq  c \gamma (\Xi_0,Y,\tilde h) \quad \text{on } \tilde\P_S^2.
\ee
\end{theorem}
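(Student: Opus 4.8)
I would prove the two implications of the equivalence separately, the common engine being the Goh transformation \eqref{GOH} and the identity $\tilde Q=\tilde\Omega$ of the Proposition above; up to this reduction the statement is Dmitruk's theorem \cite{Dmi77,Dmi87} applied to (TP), and (TP) is a genuine instance of his framework since its dynamics are affine in $U$, its data are of class $C^2$, its endpoint constraints are equalities qualified by \eqref{CQ2}, and the Goh condition \eqref{CB} holds automatically ($\tilde H_{UX}\tilde B$ being diagonal, as already noted).

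\textbf{Sufficiency.} Assume \eqref{CB} and \eqref{POS}. Arguing by contradiction, if $\gamma$-growth fails then, after a diagonal extraction, there is a sequence of feasible variations $(\delta X_0^k,V^k)\to 0$ in $\cR^{Nn+N-1}\times(L^\infty)^{|\I(S)|}$ with $\tilde\phi(X_0^k,X_1^k)-\tilde\phi(\hat X_0,\hat X_1)<\eps_k\,\gamma(\Xi_0^k,Y^k,Y_T^k)$, $\eps_k\downarrow 0$, where $(Y^k,\Xi^k)$ is the Goh transform of the linearized variation. The first ingredient is a quantitatively sharp second-order expansion of the cost along feasible trajectories: using the multiplier $\tilde\lambda$ and feasibility, $\tilde\phi(X_0^k,X_1^k)-\tilde\phi(\hat X_0,\hat X_1)\ge\tilde\Omega(Y^k,Y_T^k,\Xi^k)-r_k$ with a remainder $r_k=o\big(\gamma(\Xi_0^k,Y^k,Y_T^k)\big)$ — it is precisely to get the remainder controlled by the weak order $\gamma$ rather than by $\|V^k\|_\infty^2$ or $\|V^k\|_2^2$ that one uses the \emph{weak sense}, i.e. $\|V^k\|_\infty\to 0$, in the $\gamma$-growth notion; this estimate is the heart of Dmitruk's argument and is what I would invoke. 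Normalizing by $\mu_k:=\sqrt{\gamma(\Xi_0^k,Y^k,Y_T^k)}$ (the degenerate case $\mu_k=0$ being ruled out by the first-order conditions) and setting $\tilde V^k:=V^k/\mu_k$ etc., one has $\gamma(\tilde\Xi_0^k,\tilde Y^k,\tilde Y_T^k)\equiv 1$ and $\tilde\Omega(\tilde Y^k,\tilde Y_T^k,\tilde\Xi^k)\le\eps_k+o(1)$. Extract $\tilde Y^k\rightharpoonup\bar Y$ weakly in $(L^2)^{|\I(S)|}$, $\tilde\Xi_0^k\to\bar\Xi_0$ and $\tilde Y_T^k\to\bar h$ in $\cR$; since $\tilde\Xi^k$ depends on $\tilde Y^k$ only through integration, the solution map of \eqref{LINEQGOH} is compact, hence $\tilde\Xi^k\to\bar\Xi$ uniformly with $\bar\Xi$ solving \eqref{LINEQGOH}, and passing to the limit in \eqref{LINCONSGOH} gives $(\bar Y,\bar h,\bar\Xi)\in\tilde\P^2_S$. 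All terms of $\tilde\Omega$ involving $\Xi$, including the cross term $\int Y\tilde M\Xi$, pass to the limit by the strong convergence of $\tilde\Xi^k$; for $\int Y\tilde R Y$ one first notes, by a standard localization with fast oscillations corrected into $\tilde\P^2_S$ via \eqref{CQ2}, that \eqref{POS} forces $\tilde R\succeq 2cI$ (the Legendre--Clebsch-type positivity underlying the singular structure, consistent with the standing \eqref{LCa}), whence $Y\mapsto\int Y\tilde R Y$ is weakly lower semicontinuous. Therefore $\tilde\Omega(\bar Y,\bar h,\bar\Xi)\le\liminf_k\tilde\Omega(\tilde Y^k,\tilde Y_T^k,\tilde\Xi^k)\le 0$, so \eqref{POS} yields $\gamma(\bar\Xi_0,\bar Y,\bar h)=0$, i.e. $\bar\Xi_0=0$, $\bar Y=0$, $\bar h=0$; then all $\Xi$- and cross-terms in $\tilde\Omega(\tilde Y^k,\cdot,\cdot)$ tend to $0$, so $\tilde R\succeq 2cI$ forces $\int_0^1|\tilde Y^k|^2\to 0$, hence $\gamma(\tilde\Xi_0^k,\tilde Y^k,\tilde Y_T^k)\to 0$, contradicting its being $\equiv 1$.

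\textbf{Necessity.} Assume $(\hat U,\hat X)$ is a weak minimum of (TP) with $\gamma$-growth in the weak sense and \eqref{CQ2}. Then \eqref{CB} is the Goh second-order necessary condition recalled in the text (and anyway automatic for (TP)). For \eqref{POS}, take $(V,Z)\in\tilde\C$ with Goh transform $(Y,\Xi)$; realize a family $t\mapsto(\hat U+tV+O(t^2),\,\hat X_0+tZ_0+O(t^2))$ of feasible variations, the $O(t^2)$ corrections restoring the endpoint equalities (possible by \eqref{CQ2}), apply $\gamma$-growth along $t\to 0$ together with the second-order expansion of $\tilde\phi$, and use $\gamma(tY,\ldots)=t^2\gamma(Y,\ldots)$ to obtain $\tilde Q(V,Z)\ge c\,\gamma(\Xi_0,Y,Y_T)$; by the Proposition this reads $\tilde\Omega(Y,Y_T,\Xi)\ge c\,\gamma(\Xi_0,Y,Y_T)$. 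The image of $\tilde\C$ under the Goh map is dense in $\tilde\P^2_S$ — density of $L^\infty$ in $L^2$ for the $Y$-component, a free adjustment of the finite-dimensional variable $\tilde h$, and \eqref{CQ2} to keep \eqref{LINCONSGOH} satisfied under these perturbations — and $\tilde\Omega$ and $\gamma$ are continuous on $(L^2)^{|\I(S)|}\times\cR\times(H^1)^{Nn+N-1}$, so the inequality extends to all of $\tilde\P^2_S$, which is \eqref{POS}.

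\textbf{Main obstacle.} The delicate point is not the functional-analytic skeleton but the sharp second-order estimate $\tilde\phi(X_0^k,X_1^k)-\tilde\phi(\hat X_0,\hat X_1)=\tilde\Omega(Y^k,Y_T^k,\Xi^k)+o\big(\gamma(\Xi_0^k,Y^k,Y_T^k)\big)$ along feasible trajectories, with the remainder measured against the \emph{weak} order $\gamma$ rather than against stronger norms of $V^k$; controlling the nonlinear remainders and the failure of exact criticality of the linearized variation at this scale — which is exactly why the definition of $\gamma$-growth must be taken in the weak sense — is the content of the estimates of \cite{Dmi77,Dmi87} on which the proof relies.
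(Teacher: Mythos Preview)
Your proposal is correct and aligned with the paper: the paper does not give its own proof of this theorem but simply refers to Dmitruk \cite{Dmi77} and \cite[Theorem 3.1]{Dmi87}, and your sketch is precisely an outline of Dmitruk's argument applied to (TP). You correctly observe that (TP) fits his framework (control-affine dynamics, $C^2$ data, qualified equality endpoint constraints, Goh condition \eqref{CB} automatic), so invoking his result is exactly what the paper does.
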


We are now ready to state the following convergence result for the  shooting algorithm.
\begin{theorem}
\label{ThConvergence}
If $(\hat U,\hat X)$ is a weak minimum of problem (TP) satisfying the constraint qualification \eqref{CQ2} and the uniform positivity condition \eqref{POS}, then the shooting algorithm is locally quadratically convergent.
\end{theorem}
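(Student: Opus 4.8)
The plan is to connect the uniform positivity condition \eqref{POS} with the invertibility of the derivative of the shooting function $\SS$ at the nominal point $\hat\omega := \big((\hat x_0^k),(\hat\tau_k),(\hat p_0^k),\hat\Psi,\hat\gamma\big)$, and then invoke the Gauss--Newton convergence result recalled at the end of Section \ref{SecShooting}. Since $\SS$ is $C^1$ with Lipschitz derivative (by the smoothness hypotheses of Section \ref{SecProblem}) and $\SS(\hat\omega)=0$ by construction of the shooting function, it suffices to prove that $D\SS(\hat\omega)$ is injective (equivalently, since the system is overdetermined, that $D\SS(\hat\omega)$ has full column rank). Quadratic local convergence of the Gauss--Newton iterates to $\hat\omega$ then follows immediately.

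First I would set up the linearization of the shooting equations. A variation $\big((\delta x_0^k),(\delta\tau_k),(\delta p_0^k),\delta\Psi,\delta\gamma\big)$ in the kernel of $D\SS(\hat\omega)$ generates, via the linearized state equation \eqref{eqxk1}--\eqref{eqxk4} and linearized costate equation \eqref{eqpk}, a pair of trajectory/costate variations $(\delta X,\delta P)$ together with a control variation $V=(\delta u^k)_{k\in\I(S)}$ obtained by differentiating the stationarity identity \eqref{ddotHuk} (this uses the strengthened Legendre--Clebsch condition \eqref{LCa}, which guarantees that $\delta u^k$ is well-defined and bounded). The vanishing of the components of $D\SS(\hat\omega)$ applied to this variation says precisely that $(V,\delta X)$ satisfies the linearized endpoint constraints \eqref{LINCONS}, i.e. $(V,\delta X)\in\tilde\C$, and that $\delta P$ together with the multiplier variations $(\delta\Psi,\delta\gamma)$ solves the linearized costate/transversality/stationarity system — in other words, $(V,\delta X,\delta P,\delta\Psi,\delta\gamma)$ is a solution of the linearized optimality system of (TP).

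Next I would apply the Goh transformation \eqref{GOH} to $(V,\delta X)$, obtaining $(Y,\Xi)$ with $Y_1=:h$, which lies in the transformed critical cone $\tilde\P^2_S$ by the proposition preceding the definition of $\tilde\P^2_S$ (using that the Goh/symmetry condition \eqref{CB} holds trivially for (TP), as noted in the text). The central claim is then that the linearized optimality system, transformed by Goh, is exactly the first-order optimality system of the quadratic problem of minimizing $\tilde\Omega$ over $\tilde\P^2_S$; hence $(Y,h,\Xi)$ is a critical point of $\tilde\Omega$ on that cone. But \eqref{POS} says $\tilde\Omega$ is coercive (uniformly positive) on $\tilde\P^2_S$, so its only critical point is the origin: $\Xi_0=0$, $Y=0$ a.e., $h=0$. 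Unwinding the Goh transformation gives $\delta X\equiv 0$ and $V\equiv 0$; feeding this back into the linearized costate equation and the transversality relations, and using the qualification condition \eqref{CQ2} (surjectivity of $D\bar\Phi(\hat X_0,\hat U)$, which forces uniqueness of the adjoint variation), yields $\delta P\equiv 0$ and $\delta\Psi=0$, $\delta\gamma=0$. Thus the kernel of $D\SS(\hat\omega)$ is trivial.

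The main obstacle I anticipate is the bookkeeping in the second step: writing down the linearized optimality system of (TP) and checking, term by term, that after the Goh transformation it coincides with the Euler--Lagrange system of the quadratic form $\tilde\Omega$ on $\tilde\P^2_S$ — in particular matching the boundary terms coming from $\rho(\Xi_0,\Xi_1,\tilde h)$ and the running terms $\tilde M$, $\tilde R$ against the differentiated costate and stationarity equations, and correctly accounting for the extra multipliers $\gamma_k$ attached to the entry conditions \eqref{gx0k} and $\theta_k$ attached to the continuity conditions \eqref{continuityxTP}. This is essentially the content of the equivalence ``no-gap'' argument between the shooting Jacobian and the second-variation quadratic form; it is routine in spirit (cf. \cite{AronnaBonnansMartinon2013}) but delicate in the present setting because of the multi-arc structure and the $C$-arc entry constraints. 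Once that identification is in place, the implication ``\eqref{POS} $\Rightarrow$ $D\SS(\hat\omega)$ injective $\Rightarrow$ quadratic convergence'' is immediate.
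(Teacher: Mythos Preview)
Your proposal is correct and is, in fact, a faithful unpacking of the paper's own proof, which consists of a single line: ``This is a consequence of the convergence result in \cite[Theorem 5.4]{AronnaBonnansMartinon2013}.'' The paper then remarks that the cited argument establishes precisely the injectivity of $D\SS(\hat\omega)$. Your plan --- linearize the optimality system, apply the Goh transformation to land in $\tilde\P^2_S$, identify the linearized system with the Euler--Lagrange equations of $\tilde\Omega$, invoke coercivity \eqref{POS} to kill the kernel, and use \eqref{CQ2} for uniqueness of the adjoint variation --- is exactly the content of that cited theorem, so there is no substantive difference in approach; you have simply made the black-box citation explicit, and correctly identified the delicate bookkeeping (multi-arc structure, the entry multipliers $\gamma_k$) as the only real work.
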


\begin{proof}
This is a consequence of the convergence result in \cite[Theorem 5.4]{AronnaBonnansMartinon2013}.
\end{proof}

Note that in the proof of the above Theorem, it is established that the hypotheses imply that the derivative of the shooting function is injective.

\section{Application to a regulator problem}\label{SecExamples}

Consider the following regulator problem, where $T=5$:
\be
\begin{split}
&\min \half \int_0^5 (x_{1,t}^2 + x_{2,t}^2) \dd t + \half x_{1,5}^2, \\
&  \dot x_{1,t} = x_{2,t},\quad \dot x_{2,t} =   u_t \in [-1,1],
\end{split}
\ee
subject to the state constraint and initial conditions
\be
x_{2,t} \geq -0.2, \quad x_{1,0}=0,\quad x_{2,0}=1.
\ee
To write the problem in the Mayer form, we introduce an auxiliary state variable given by the dynamics
\be 
\dot x_{3,t} =  \half  (x_{1,t}^2 + x_{2,t}^2),\quad x_{3,0}=0.
\ee
The resulting problem is then
\be
\label{Preg}
\begin{split}
& \min\, x_{3,5} + \half x_{1,5}^2, \\
 & \dot x_1 = x_2, \quad  \dot x_2 =   u, \quad \dot x_3 =  \half  (x_1^2 + x_2^2), \\
 &x_{1,0}=0,\quad x_{2,0}=1,\quad x_{3,0}=0,\\
 &-1 \leq u \leq 1,\; \; x_2 \geq -0.2. 
\end{split}
\ee

Using the optimal control solver BOCOP  \cite{Bocop2017}  we estimated that the optimal control $\uh$ is a concatenation of a bang arc in the lower bound, followed by a constrained arc and ended with a singular one. Briefly, we can say that the optimal control has a  $B_-CS$ structure.

\subsection{Checking local optimality}

 In this subsection we compute analytically the optimal solution of \eqref{Preg} and check that it verifies the second order sufficient condition for state-constrained control-affine problems proved in \cite[Theorem 5]{AronnaBonnansGoh2016}.
While the problem is convex, and hence, satisfying the first order optimality conditions is enough for proving optimality, the quoted second order conditions are of interest since they imply the quadratic growth, see 
\cite[Definition 3]{AronnaBonnansGoh2016}.

To problem \eqref{Preg} we associate the functions $g:\cR^3 \to \cR,$ $f_0,f_1:\cR^3 \to \cR^3$ given by
\benl
g(x):= -x_2-0.2,\quad f_0(x):= \begin{pmatrix} x_2\\ 0 \\ \half( x_1^2+x_2^2 )\end{pmatrix},\quad
f_1(x) = \begin{pmatrix} 0\\ 1\\ 0\end{pmatrix}.
\eenl
So that the  optimal  control $\uh$ is equal to -1 on the $B_-$ arc, and to 
$\ds-\frac{g'(\xh)f_0(\xh)}{g'(\xh)f_1(\xh)} = 0$
on  $C$  according to formula \eqref{uinC}, where $\xh$ is the associated  optimal state.

 The pre-Hamiltonian of \eqref{Preg} reads 
$$
H(u,x,p):= p_1 x_2 + p_2u +\half p_3(x_1^2+x_2^2),
$$
where $p:=(p_1,p_2,p_3)$ is the costate.


 Over the singular arc $S$, the inequality 
$u_{\min} < \uh_t < u_{\max}$ and  the minimum condition
of Pontryagin's Maximum Principle (see {\em e.g.} \cite[equation (2.12)]{AronnaBonnansGoh2016}) imply that 
\be
\label{stationarity2}
H_u=0.
\ee
Differentiating in time, (see {\em e.g.}
\cite{Mau76,AronnaBonnansMartinon2013}),
we obtain
\be
H_u = pf_1,\quad \dot{H}_u = p[f_1,f_0],\quad \ddot{H}_u
=   p\big[[f_1,f_0],f_0 \big] + \uh p\big[[f_1,f_0],f_1\big],
\ee
where $[X,Y]:= X'  Y - Y' X$ denotes the {\em Lie bracket}
associated with a pair of vector fields $X,Y : \cR^n \to \cR^n.$
In this control-affine case, the control variable does not appear
explicitly neither in the expression of $H_u$ nor in its time
derivative $\dot{H}_u.$
So, if
$p\big[[f_1,f_0],f_1\big]$ takes only nonzero values along singular arcs,
we obtain an expression for the control on singular arcs, namely
\be
\label{uinS}
\uh=-\frac{p\big[[f_1,f_0],f_0 \big](\xh)}{p\big[[f_1,f_0],f_1\big](\xh)}.
\ee



The involved Lie brackets for this examples are
\be
\label{Lie}
[f_1,f_0]= \begin{pmatrix}  -1 \\ 0 \\ -x_2 \end{pmatrix},\quad
\big[ [f_1,f_0],f_0 \big] =  \begin{pmatrix} 0 \\ 0 \\ x_1 \end{pmatrix},\quad
\big[ [f_1,f_0],f_1 \big]= \begin{pmatrix} 0 \\ 0\\ -1 \end{pmatrix}.
\ee
On the other hand, the costate equation on the singular arc $S$ gives
\be 
\label{examplep}
\begin{split}
\dot{p}_1 &= - p_3\xh_1,\quad p_{1,5} = \xh_{1,5}, \\
\dd{p}_2 &= -(p_1+p_3 \xh_2)\dd t+\nu, \quad p_{2,5}=0, \\
\dot{p}_3 &= 0,\quad p_{3,5}=1,
\end{split}
\ee
{where $\mu$ is the multiplier associated with the state constraint, and 
$\nu$ is the density of $\mu$.}
Thus, 
\be\label{examplep3}
p_3 \equiv 1,
\ee
 and from \eqref{uinS} and \eqref{Lie} we get
\be
\label{uonS}
\uh=\xh_1\quad \text{on}\ S. 
\ee
Moreover, the first order optimality conditions imply that
\be
\label{p2CS}
0=H_u=p_2\quad \text{on } C\cup S.
\ee

Let us write $\hat{\tau}_1,\hat{\tau}_2$ for the switching times, so that  
$$
B_-=[0,\hat{\tau}_1],\quad C=[\hat{\tau}_1,\hat{\tau}_2],\quad S=[\hat{\tau}_2,5].
$$ 
Since the control $\uh$ is constantly equal to $-1$ on $B_-,$ then 
\be
\label{examplex2}
\xh_{2,t} = 1-t \quad \text{on. } B_-,
\ee
until it saturates the state constraint at $\hat{\tau}_1.$ Hence $1- \hat{\tau}_1 = -0.2,$ so it follows that 
\be
\label{hattau1}
\hat\tau_1=1.2
\ee
and $B_- = [0,1.2].$ Consequently, 
\be\label{examplex1}
\xh_{1,t} = t-\frac{t^2}{2} \quad \text{on } B_-.
\ee
On $C=[1.2,\hat{\tau}_2],$ necessarily $\xh_2 \equiv -0.2,$ thus $u\equiv 0$ and 
\be
\label{x1C}
\xh_{1,t} = 0.48 - 0.2(t-1.2) = 0.72 - t/5 \quad \text{ on } C.
\ee
On the singular arc $S=[\hat{\tau}_2,5]$ we get, from the expression of $\uh$ on $S$ \eqref{uonS}, that
$\ddot \xh_1 = \dot \xh_2 = \uh = \xh_1.
$
Thus
\be
x_{1,t} = c_1 e^{5-t} + c_2 e^{t-5}\quad \text{ on } S,
\ee
for some real constant values $c_1,c_2.$
Therefore,
\be 
{\xh}_{2,t} = \dot{\xh}_{1,t} = -c_1 e^{5-t} + c_2 e^{t-5} \quad \text{ on } S.
\ee
The stationarity condition  $0=H_u=p_2$ on $S$ yields $0=\dot{p}_2 = -p_1-\xh_2,$ where the second equality of latter equation follows from  \eqref{examplep} and since $\nu \equiv 0$ on $S.$
Thus $p_1=-\xh_2$ on $S$, so
\be
p_{1,t} = c_1 e^{5-t} - c_2 e^{t-5}  \quad \text{ on } S.
\ee
The transversality condition for $p_1$ (see \eqref{examplep}) implies
$c_1+c_2 = c_1-c_2.$
Thus, $c_2=0$ and 
\be
\xh_1=-\xh_2 = p_1 = c_1 e^{5-t}  \quad \text{ on } S.
\ee
Since $-0.2=x_{2,\hat{\tau}_2},$ then 
$
x_{1,\hat{\tau}_2} = 0.2.
$
Hence, from the expression of $\xh_1$ on $C$ given in \eqref{x1C}, we obtain $0.72-\hat{\tau}_2/5 = 0.2$ and, consequently
\be
\label{hattau2}
\hat{\tau}_2=2.6.
\ee
Additionally, 
$-0.2 = x_{2,\hat{\tau}_2} = -c_1 e^{2.4}$ so that 
$
c_1 = 0.2 e^{-2.4}. 
$
At time $\hat{\tau}_2$ we have that 
$p_1= c_1 e^{2.4} = 0.2$
and, from the costate equation \eqref{examplep} and from \eqref{x1C}, we have 
\be
\label{regp1-dc}
\dot p_{1,t} = - \xh_{1,t} = t/5 - 0.72 
\leq 2.6/5 -0.72 < 0 \quad \text{on } C = [1.2,2.6].
\ee
So $p_1$ is decreasing on $C$ and,  recalling \eqref{p2CS}, this implies that 
\be
\nu_t = p_{1,t} + \xh_{2,t} > p_{1,\hat{\tau}_2}- 0.2 =0\quad \text{for } t\in [\hat{\tau}_1,\hat{\tau}_2).
\ee
Thus the complementarity condition for the state constraint \cite[equation(3.4)(ii)]{AronnaBonnansGoh2016} holds true. 

In order to check that the strict complementarity hypothesis (i) of \cite[Theorem 5]{AronnaBonnansGoh2016} is satisfied, we will prove that the corresponding strict complementarity condition for the control constraint of  \cite[Definition 4]{AronnaBonnansGoh2016} holds.
In view of \eqref{regp1-dc}, we get
\be
p_{1,t} = 0.2 + (t-\hat{\tau}_2)^2/10 - 0.72(t-\hat{\tau}_2). 
\ee 
Therefore 
$
p_{1,\hat{\tau}_1} = 0.2 + (1.4)^2/10 +0.72 \times 1.4 = 1.404.
$
On the $B_-$ arc,
we have seen that $\xh_1>0$ due to \eqref{examplex1} and $x_{1,0}=0,$ so
that $\dot p_1 = - \xh_1 <0$, and since 
$p_{1,\hat{\tau}_1}>0,$ it follows that $p_1$ has values greater than
$p_{1,\hat{\tau}_1} = 1.404$.
Therefore, since $\xh_2 >-0.2$ over $B_-$,
\be
\dot{p}_2=-p_1-\xh_2 < 0\quad \text{ on } B_-.
\ee
Since $p_{2,\hat{\tau}_1}=0,$ we get $p_2 > 0$ on $[0,\hat{\tau}_1)$ or, equivalently, $H_u>0$ on $[0,\hat{\tau}_1).$ We conclude that the strict complementarity condition for the control constraint given in {\cite[Definition 4]{AronnaBonnansGoh2016}} holds. This completes the verification of \cite[Theorem 5 - (i)]{AronnaBonnansGoh2016}.

Let us now verify the uniform positivity \cite[equation (4.6)]{AronnaBonnansGoh2016}.
The dynamics for the linearized state is 
\be
\begin{split}
& \dot{z}_1 = z_2,\quad \dot{z}_2=v,\quad \dot{z}_3 = \hat{x}_1 z_1 + \hat{x}_2 z_2,\\
& z_{1,0} = z_{2,0} = z_{3,0} =0.
\end{split}
\ee
Let $\C_S$ and ${\P}^2_*$ denote the strict critical cone and the extended cone (invoking the notation used in \cite{AronnaBonnansGoh2016}) at the optimal trajectory $(\uh,\xh),$ respectively. Since $\uh$ is $B_-CS$ then, for any $(v,z) \in \C_S,$  $v=0$ on the initial interval $B_-.$ Consequently, 
\be
\label{yB-}
y=0 \quad \text{on } B_-,\text{ for all } (y,\xi,h) \in \P^2_*.
\ee
On the other hand, the dynamics for the transformed linearized state is
\begin{gather}
\label{eqxi}
\dot\xi_1 = \xi_2+y,\quad \dot\xi_2=0,\quad \dot\xi_3 = \xh_1 \xi_1 + \xh_2\xi_2 + \xh_2y,\\
\xi_{1,0} = \xi_{2,0} = \xi_{3,0}=0.
\end{gather}

Take $(y,\xi,h) \in \P^2_*.$ Then
\be
\label{xi2}
\xi_2\equiv 0\quad \text{on } [0,5].
\ee
In view of \cite[equation (3.15)(i)]{AronnaBonnansGoh2016}, we have that $-\xi_2-y=0$ on $C.$ Thus, due to \eqref{xi2}, we get that
\be
\label{y=0}
y=0\quad \text{on } B_- \cup C.
\ee
Thus, from \eqref{eqxi}-\eqref{y=0}, we get $\xi_1=\xi_3=0$ on $B_- \cup C.$ 
{Regarding the last component $h$ of the considered critical direction $(y,\xi,h) \in \P^2_*,$ in view of the linearized
cost equation \cite[equation (3.16)]{AronnaBonnansGoh2016} and due to the fact that there are no final constraints, we get that}
\be
\label{h1}
\xh_{1,5}\,\xi_{1,5} + \xi_{3,5} = 0.
\ee
 Then, we deduce that there is no restriction on $h.$
We obtain
\be
\P^2_* = 
\left\{
\begin{split}
& (y,\xi,h) \in L^2 \times (H^1)^n \times \cR : \,\, y=\xi_1=\xi_2 = 0 \text{ on } B_- \cup C,\\
& \xi_2 = 0 \text{ on } [0,5],\ \dot\xi_1 = y \text{ and } \dot \xi_3 = \xh_1\xi_1 + \xh_2 y\ \text{ on } S, \, \eqref{h1} \text{ holds}
\end{split}
\right\}.
\ee
The quadratic forms $Q$ and $\Omega$ are given by
\be
Q(v,z):= \int_0^T( z^2_1 +z^2_2) \dd t + z^2_{1,5}; 
\quad 
\Om(y,h,\xi):= \int_0^T( \xi^2_1 +
(\xi_2+y)^2) \dd t + h^2.
\ee
Thus, $\Om$ is a Legendre form on $\{(y,h,\xi) \in L^2\times \cR
\times (H^1)^3:\text{\eqref{eqxi} holds}\}$ and is coercive on
$\P^2_*.$ Hence Theorem 5 in \cite{AronnaBonnansGoh2016}
holds. Consequently, $(\uh,\xh)$ is a Pontryagin minimum of problem
\eqref{Preg} satisfying the $\gamma$-growth condition.

\subsection{Transformed problem}
 Here we transform the problem \eqref{Preg} to obtain a problem with neither control nor state constraints, as done in Section \ref{SubsecReformulation}. 
 
The optimal control associated with \eqref{Preg} has a $B_-CS$ structure, as said above. Then we  triplicate the number of state variables obtaining the new variables $X_1,\dots,X_9$ and we 
 consider two switching times that we write $X_{10},X_{11}.$ The new
 problem has only one control variable that corresponds to the
 singular arc and which we call $V.$  The reformulation of
 \eqref{Preg} is as follows 
\be
\label{Pregref}
\begin{split}
\min\  &X_{9,1} + \half X^2_{7,1}, \\
& \dot{X}_1 = X_{10} X_2, \\
& \dot{X}_2 = - X_{10}, \\
& \dot{X}_3 = \half X_{10} (X_1^2 + X_2^2), \\
& \dot{X}_4 = (X_{11}-X_{10})X_5, \\
& \dot{X}_5 = 0, \\
& \dot{X}_6 = \half  (X_{11}-X_{10}) (X_4^2 + X_5^2), \\
& \dot{X}_7 = (5-X_{11}) X_8,\\
& \dot{X}_8 =(5-X_{11}) V,\\
& \dot{X}_9 = \half (5-X_{11})  (X_7^2 +X_8^2),\\
& \dot{X}_{10} = 0, \\
& \dot{X}_{11} = 0,\\
& X_{1,0}=0,\quad X_{2,0}=1,\quad X_{3,0}=0, \\
& X_{1,1}=X_{4,0},\quad X_{2,1} = X_{5,0},\quad X_{3,1} = X_{6,0},\\
& X_{4,1}=X_{7,0},\quad X_{5,1} = X_{8,0},\quad X_{6,1} = X_{9,0},\\
& X_{2,1}=0.2, \, {( \text{or } g(x_2(\tau_2))=0)}. \\
\end{split}
\ee
From \eqref{uonS} we deduce that $V=X_7.$

We solved problem \eqref{Pregref} by the shooting algorithm proposed in Section \ref{SecShooting}. The graphics of the optimal control and states are shown in Figure \ref{regulator} in the original variables $u$ and $x,$ and the corresponding costate variables $p_1,p_2,p_3$ are displayed in Figure \ref{regulator2}.
 
 
In our numerical tests we take 1000 time steps.
The optimal switching times obtained numerically are
$\hat{\tau}_1=1.2$ and $\hat{\tau}_2 = 2.6036023,$ so these values
agree with the ones founded analytically
in \eqref{hattau1} and \eqref{hattau2},
while the optimal cost is $0.3934884$ and the shooting function evaluated at the optimal trajectory is $5.\times 10^{-6}.$

\begin{figure}[!h]
\centering
\begin{center}
\subfigure{\includegraphics [width=5.5cm]{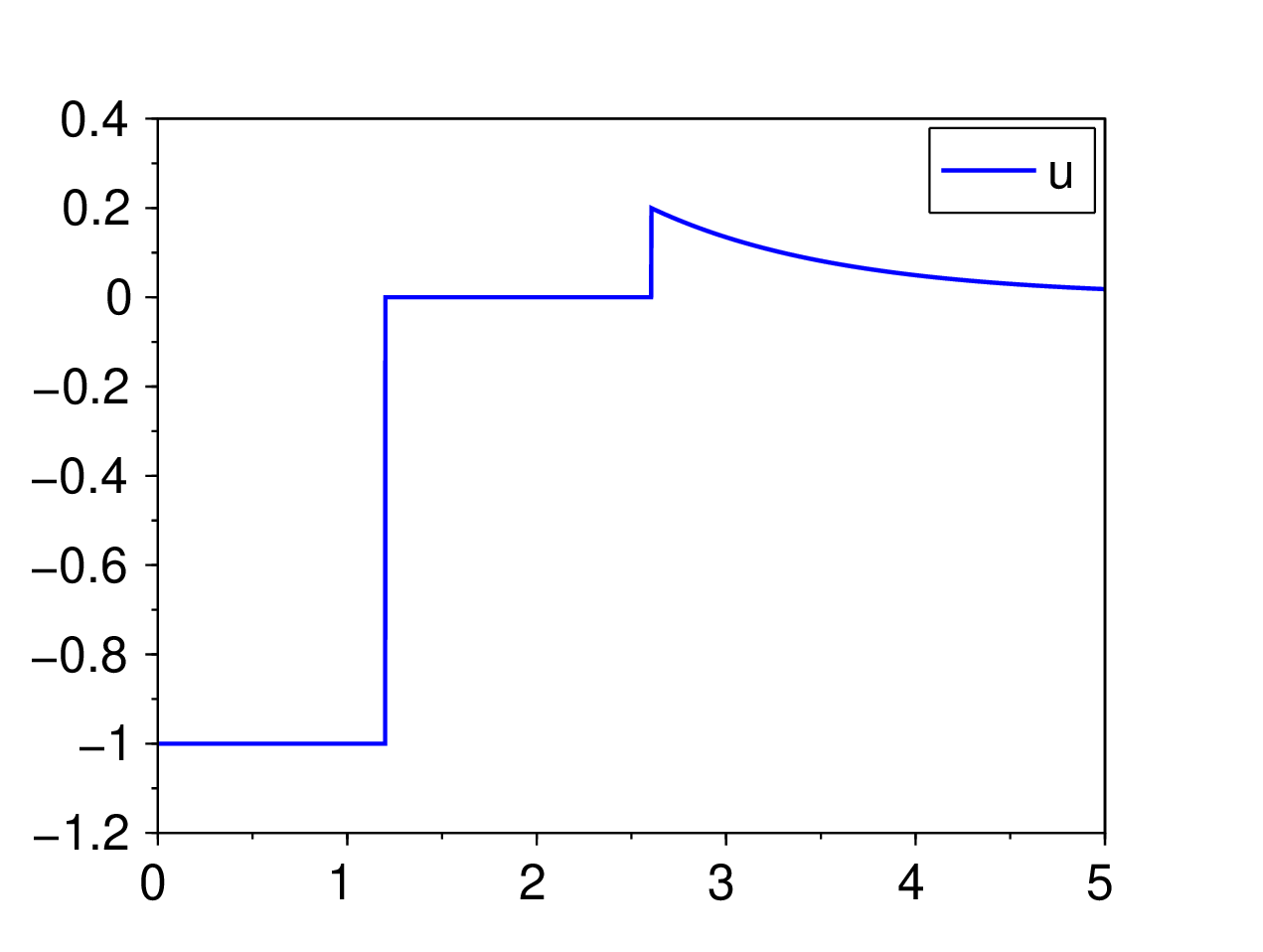}}
\subfigure{\includegraphics [width=5.5cm]{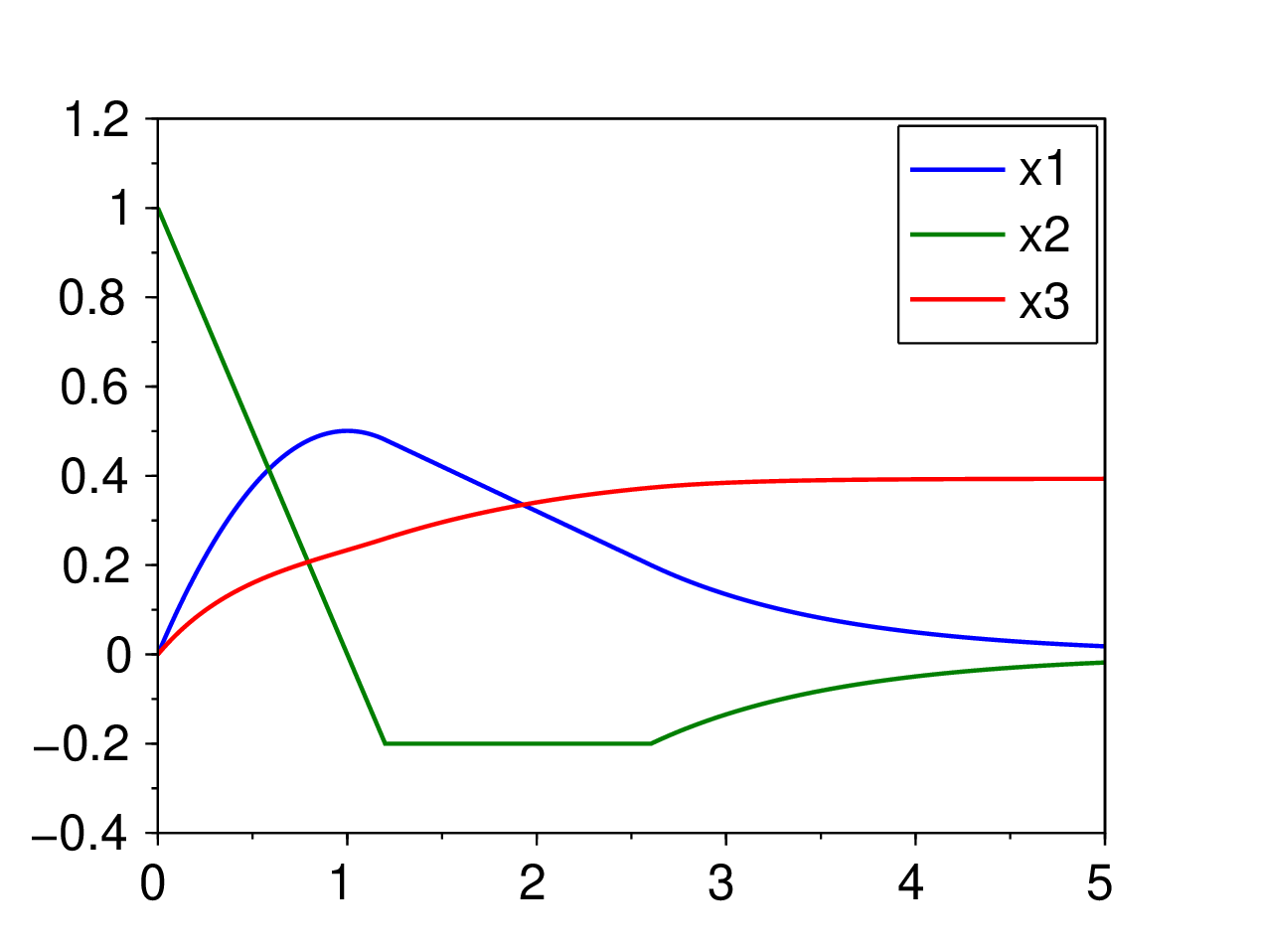}}
\end{center}
\caption{Optimal control and state variables}
\label{regulator}
\end{figure}

\begin{figure}[!h]
\centering
\begin{center}
\includegraphics [width=5.5cm]{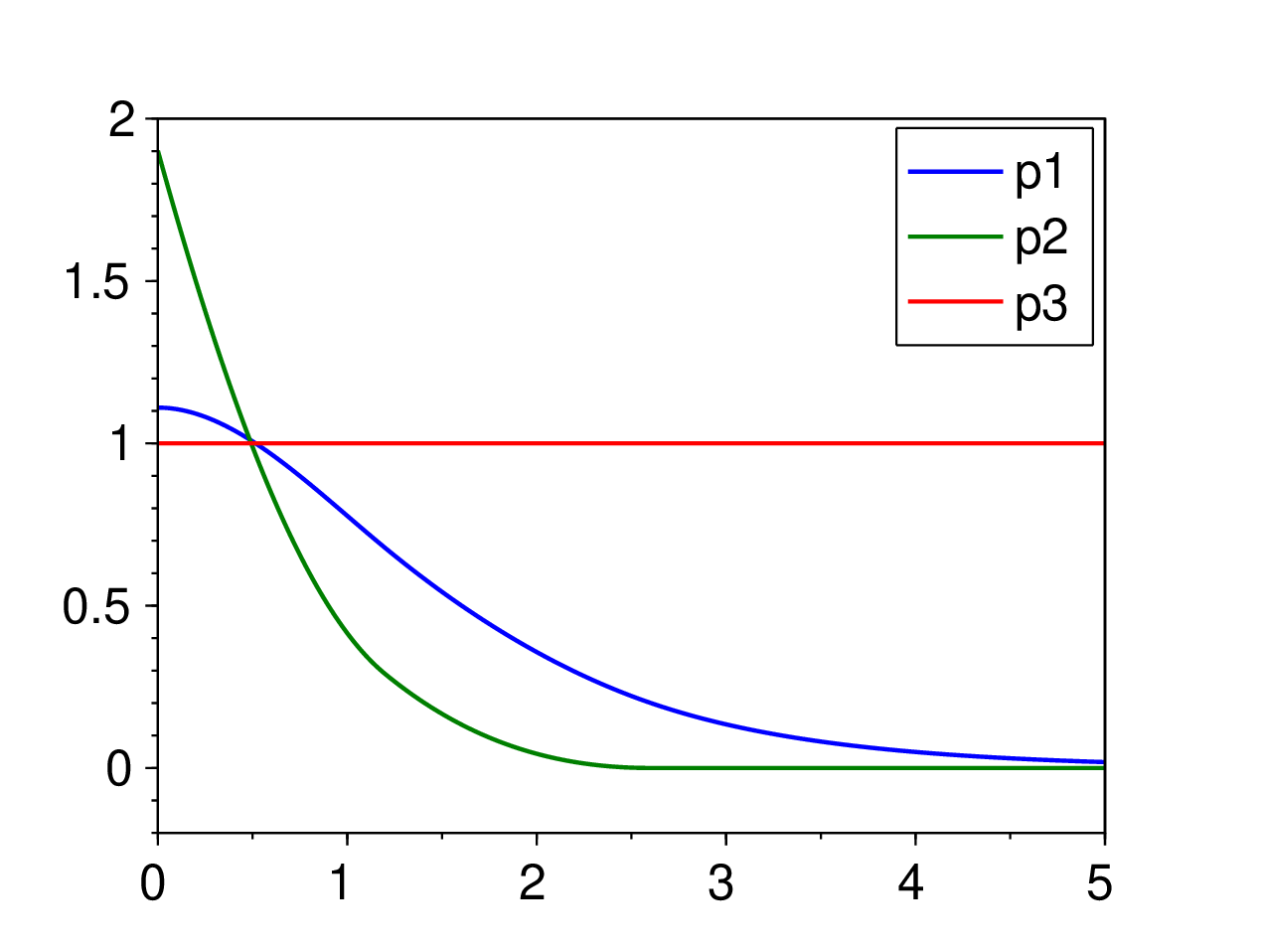}
\end{center}
\caption{Costate variables}
\label{regulator2}
\end{figure}

{
\begin{remark}
In order to compare with existing methods, we have solved the same problem using GEKKO Python \cite{beal2018gekko}. We have tested different variable definitions for the control input and we found the best result by setting the control as a {\em manipulated variables} \cite{beal2018gekko}.
The results are shown in Figure \ref{GEKKO2}. 
On the right of this figure,  we exhibit a zoom around the second switching point $\hat \tau_2$ of the optimal control. We can see that the method shows a  discontinuity of the control at $\hat \tau_2,$ but that the approximation of $\hat \tau_2$ is between $2.57$ and $2.58$, while we have calculated analytically that $\hat \tau_2 = 2.6$ and our shooting algorithm finds the approximate value $2.6036023.$ GEKKO was tested with up to 1000 time steps, while our shooting algorithm was run with 150 time steps. This indicates that our method is more accurate when it comes to finding switching times and approximating bang-singular solutions. This feature of shooting methods has been already observed in the literature (see {\em e.g.} \cite{Trelat2012}.
\end{remark}
}

\begin{figure}[!h]
\centering
\begin{center}
\subfigure{\includegraphics [width=5.5cm]{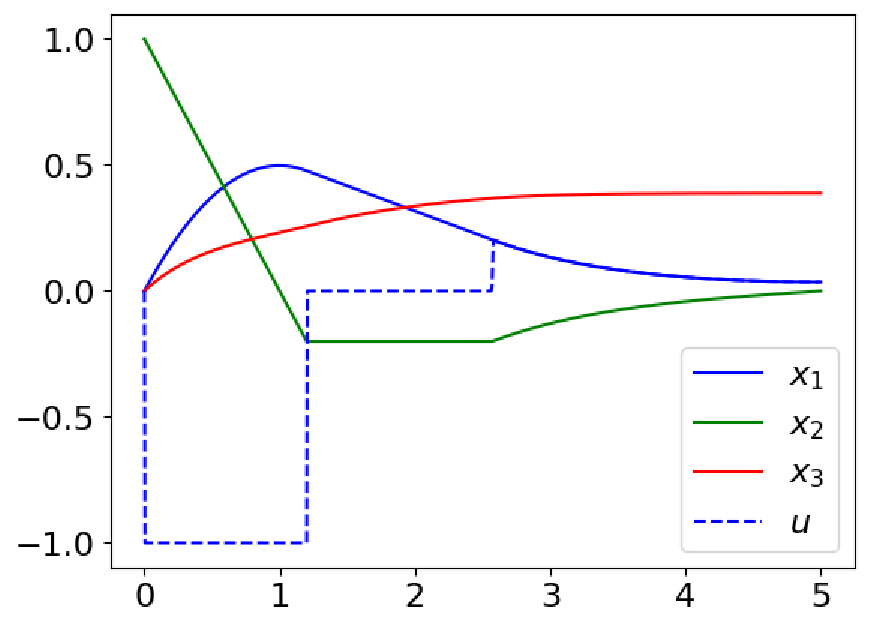}}
\subfigure{\includegraphics [width=5.5cm]{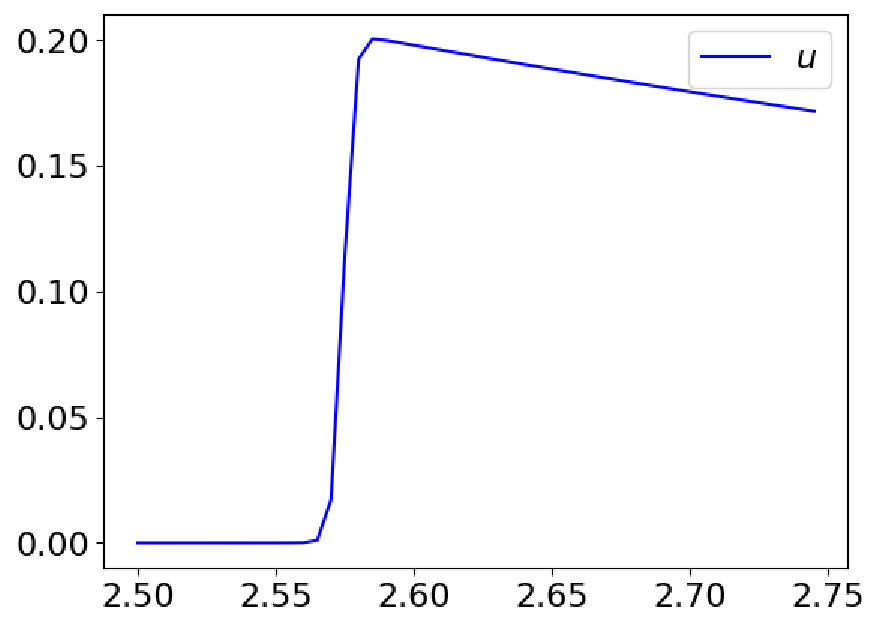}}
\end{center}
\caption{Numerical solution using GEKKO. On the left: optimal control and state variables. On the right: zoom of the optimal control around the second switching time.}
\label{GEKKO2}
\end{figure}

\section{Conclusion}

We have shown that, for problems that are affine w.r.t. the control, and for which the time interval can be partitioned in a finite set of arcs, the shooting algorithm converges locally quadratically if a second order sufficient condition holds, and thus the method may be an efficient way to get a highly accurate solution. 

The essential tool was to formulate a transformed problem. We think that this approach could be extended to more general problems with several state constraints and control variables, the latter possibly entering nonlinearly in the problem.  {Problems with vector control are important in complex models, see {\em e.g.} \cite{LeparouzHerisseJean2022,LeparouzHerisseJean2022CDC} where they studied a control-affine problem with vector control, state constraints, but no singular arcs, generically, in the state-constrained solution. When considering several controls and state constraints, it becomes} important to take into account the possibility of having high-order state constraints.

\section*{Acknowledgements}

The first author was supported by FAPERJ (Brazil) through the {\em Jovem Cientista do Nosso Estado} Program and by CNPq (Brazil) through the {\em Universal} Program and the
Productivity  in Research Scholarship.
  The second author was supported by the FiME Lab Research Initiative (Institut Europlace de Finance), and by the PGMO program.
  
  {We acknowledge the anonymous reviewers for their careful reading and comments that helped us improve this manuscript. 
  
  The first author thanks Gabriel de Lima Monteiro  for his suggestions on the numerical implementations.}

  \section*{Statements and Declarations}
  
  The authors have no conflicts of interest to declare that are relevant to the content of this article.


\bibliographystyle{plain}
\bibliography{shooting_biblio}



\end{document}